\newcommand{\ignore}[1]{}
\renewcommand{\Im}{\operatorname{Im}}
\newcommand{\abs}[1]{\left\lvert {#1} \right\rvert}
\newcommand{\snorm}[1]{\lVert {#1} \rVert}
\newcommand{\C}{{\mathbb{C}}}
\newcommand{\R}{{\mathbb{R}}}
\newcommand{\bP}{{\mathbb{P}}}
\newcommand{\sM}{{\mathcal{M}}}
\newcommand{\sT}{{\mathcal{T}}}
\newcommand{\sX}{{\mathcal{X}}}
\newtheorem{thm}{Theorem}[section]
\newtheorem{prop}[thm]{Proposition}
\newtheorem{cor}[thm]{Corollary}
\newtheorem{lemma}[thm]{Lemma}
\theoremstyle{definition}
\newtheorem{example}[thm]{Example}
\theoremstyle{remark}
\author{Ji\v{r}\'{\i} Lebl}
\thanks{The first author was in part supported by NSF grant DMS-1362337 and
Oklahoma State University's DIG and ASR grants.}
\address{Department of Mathematics, Oklahoma State University,
Stillwater, OK 74078, USA}
\email{lebl@math.okstate.edu}
\date{December 17, 2014}
\title%
[Singular Levi-flat hypersurfaces in $\bP^n$ induced by curves in the Grasmannian]%
{Singular Levi-flat hypersurfaces in complex projective space induced by curves in the Grassmannian}
\begin{document}


\begin{abstract}
Let $H \subset {\mathbb P}^n$ be a real-analytic subvariety
of codimension one
induced by a real-analytic curve in the Grassmannian $G(n+1,n)$.
Assuming $H$ has a global defining function,
we prove $H$ is Levi-flat, the closure of
its smooth points of top dimension is a union of complex hyperplanes,
and its singular set is either of dimension $2n-2$
or dimension $2n-4$.
If the singular set is of dimension
$2n-4$, then we show the hypersurface is algebraic
and the Levi-foliation extends to a singular holomorphic
foliation of ${\mathbb P}^n$ with a meromorphic (rational of degree 1)
first integral.  In this case, $H$ is in some sense simply a complex cone over
an algebraic curve in ${\mathbb P}^1$.
Similarly if $H$ has a degenerate singularity, then $H$ is also algebraic.
If the dimension of the singular set is $2n-2$ and is nondegenerate, we show
by construction that the hypersurface need not be algebraic nor
semialgebraic.
We construct a Levi-flat real-analytic subvariety in ${\mathbb P}^2$ of
real codimension 1 with compact leaves that is not contained in any proper
real-algebraic subvariety of ${\mathbb P}^2$.  Therefore a straightforward
analogue of Chow's theorem for Levi-flat hypersurfaces does not hold.
\end{abstract}

\maketitle



\section{Introduction} \label{section:intro}

Let $\bP^n$ be the $n$-dimensional complex projective space, the
space of complex lines through the origin in $\C^{n+1}$.
Chow's theorem \cite{Chow} tells us that any complex analytic subvariety of $\bP^n$
is algebraic.  We naturally ask if Chow's theorem extends to
singular Levi-flat hypersurfaces.  A Levi-flat hypersurface is,
after all, essentially a family of complex submanifolds.
In $\bP^1$ the answer is trivially no.
In $\bP^n$, $n \geq 2$, the question is significantly more difficult.
To this end we study real-analytic subvarieties induced by a curve in
the Grassmannian $G(n+1,n)$, the space of complex hyperplanes in $\bP^n$.
Such subvarieties are possibly the simplest
examples of singular Levi-flat hypersurfaces in $\bP^n$.
While for certain hypersurfaces of this type Chow's theorem holds, we show it does
not hold in general.

A smooth real hypersurface is
Levi-flat if it is pseudoconvex from both sides.  When the hypersurface is
real-analytic, then 
there exist local holomorphic coordinates $z$ such that
the hypersurface is given by
\begin{equation}
\Im z_1 = 0 .
\end{equation}
For any fixed $t \in \R$, 
$\{ z_1 = t \}$ defines a complex hypersurface, 
and these hypersurfaces
give a real-analytic foliation called the
\emph{Levi-foliation}.

Let $H \subset \bP^n$ be a real-analytic subvariety of codimension 1
(dimension $2n-1$).
Let $H_s$ be the \emph{singular} locus of $H$, that is the set of points
where $H$ fails to be a real-analytic submanifold.  Let $H^*$ be
the set of points near which $H$ is a real-analytic submanifold of dimension
$2n-1$.
We say $H$ is \emph{Levi-flat} if $H^*$ is Levi-flat.
When a leaf of the Levi-foliation is closed in $H^*$,
we consider the closure in $\bP^n$ of this leaf as the leaf itself,
and we say the leaf is \emph{compact}.
A compact leaf is a complex analytic
subvariety of $\bP^n$, and hence by Chow's theorem algebraic.
In our case, leaves will be complex hyperplanes in $\bP^n$.

Singular Levi-flat hypersurfaces 
have been the subject of much recent 
interest.
Such hypersurfaces were first studied by
Bedford~\cite{Bedford:flat} and
Burns and Gong~\cite{burnsgong:flat}.
The study of the singular set is related to the problem
of extending the Levi-foliation to a singular holomorphic foliation
of a neighbourhood, or more generally to a holomorphic $k$-web, see
Brunella~\cites{Brunella:lf,Brunella:firstint}, 
Fern{\'a}ndez-P{\'e}rez~\cite{Perez:generic}, 
Cerveau and Lins Neto~\cite{CerveauLinsNeto},
Shafikov and Sukhov~\cite{ShafikovSukhov},
and the author~\cite{Lebl:lfsing}.
A singular holomorphic foliation of codimension one
is given locally by an integrable holomorphic one-form $\omega$, that is,
$d \omega \wedge \omega = 0$.  The integral manifolds are the leaves of
the foliation, and the set where $\omega$ vanishes is the singular set
of the foliation.
A holomorphic foliation extends the Levi-foliation if the leaves of the
two foliations agree as germs at points that are regular points for both
the foliation and $H$.

In projective space, these hypersurfaces have
been studied by
the author~\cite{Lebl:projlf}, 
Fern{\'a}ndez-P{\'e}rez~\cite{Perez:projlf}, and
Fern{\'a}ndez-P{\'e}rez, Mol and Rosas~\cite{Perez:dynPn}.
A related question is the nonexistence of smooth Levi-flat
hypersurfaces in $\bP^n$, a line of research that initiated by
Lins Neto \cite{linsneto:note} in the real-analytic case for $n \geq 3$.
The technique of Lins Neto centers on extending the foliation to all of
$\bP^n$.

The first instinct to construct a singular Levi-flat hypersurface in $\bP^n$ is to
take a real one dimensional curve in the Grassmannian $G(n+1,n)$, which
is the set of complex hyperplanes in $\bP^n$.  If the induced set is
a subvariety, we have a Levi-flat hypersurface.  We will construct such an
example in $\bP^2$.  One might think it
should be easy to take a nonalgebraic real curve and obtain a
nonalgebraic hypersurface $H \subset \bP^n$, but in general the induced set
$H$ fails to be a subvariety or even a semianalytic set.

Let us first characterize those
hypersurfaces induced by a curve in the Grassmannian.
Let $C \subset G(n+1,n)$.  We say a real-analytic subvariety
$H \subset \bP^n$ is induced by $C$ if $H$ is the smallest closed
real-analytic subvariety of $\bP^n$ containing all the complex hyperplanes
corresponding to points in $C$.
It follows from Proposition~\ref{prop:Csubvar} that
the set of complex hyperplanes in $H$ is a closed subvariety
of $G(n+1,n)$.  We therefore lose no generality in assuming $C$ is a closed
real-analytic subvariety of dimension 1 (a curve).




\begin{thm} \label{thm:thm1}
Suppose $H \subset \bP^n$ is a real-analytic subvariety of
real codimension one
induced by an irreducible real-analytic curve in $G(n+1,n)$.  Suppose there
exists a nontrivial real-analytic function $r$ defined in a neighbourhood
$V$ of $H$, such that $r$ vanishes on $H$.
Then
\begin{enumerate}[(i)]
\item \label{item:i} $H$ is Levi-flat,
\item \label{item:ii} the topological closure $\overline{H^*}$ is a union of complex
hyperplanes\footnote{Note that $\overline{H^*}$ may be a proper subset of
$H$ even if $H$ is irreducible.  See Example~\ref{example:whitney}.},
\item $\dim H_s = 2n-2$ or 
$\dim H_s = 2n-4$.
\end{enumerate}
If $\dim H_s = 2n-4$, then
there exist homogeneous coordinates $[z_0,\ldots,z_n]$ such that
\begin{enumerate}[(i),resume]
\item the set $H_s$ is given by $z_1 = z_2 = 0$,
\item $H$ is defined by a real bihomogeneous polynomial equation
$\rho(z_1,z_2,\bar{z}_1,\bar{z}_2) = 0$,
\item the holomorphic foliation given by $z_2 dz_1 - z_1 dz_2$ extends the
Levi-foliation of $H^*$.
\end{enumerate}
\end{thm}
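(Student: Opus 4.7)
The plan is to first establish (i)--(iii) using the fact that outside $H_s$, $H$ is covered by the family of complex hyperplanes parametrized by $C$, and then in the case $\dim H_s = 2n-4$ to extract a pencil structure and deduce algebraicity from the global defining function $r$. By construction $H$ contains $\bigcup_{t \in C} L_t$, where $L_t \subset \bP^n$ is the hyperplane corresponding to $t$, and this union has real dimension $2n-1$ matching $\dim H$. At any smooth point $p \in H^*$ one has $p \in L_t \subset H$ for some $t$; this complex codimension-$1$ submanifold forces Levi-flatness with $L_t$ as the Levi leaf through $p$, and also identifies the leaves of the Levi foliation as the $L_t$'s themselves, so $\overline{H^*} = \bigcup_{t \in C} L_t$, giving (i) and (ii).

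For (iii) the dichotomy is essentially combinatorial: either all $L_t$'s share a common $\bP^{n-2}$ (the pencil case), in which case this base locus $P$ is $(2n-4)$-dimensional and $H_s \supset P$ has dimension at least $2n-4$, or for some $t$ the pairwise intersections $L_t \cap L_{t'}$ genuinely vary as $t'$ varies in $C$. A short linear-algebra argument rules out $\dim H_s = 2n-3$: if inside a fixed $L_t$ the intersections $L_t \cap L_{t'}$ were all equal to a common $\bP^{n-2}$, call it $Q_t$, then $Q_{t'}$ would also lie in every $L_t \cap L_{t'}$, forcing $Q_t = Q_{t'}$ and hence the pencil case. Consequently, in the non-pencil case the pairwise intersections sweep out a $(2n-2)$-dimensional subset of $H_s$.

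In the pencil case, since $P \subset H_s$ and $\dim H_s = 2n-4$ by assumption, we have $H_s = P$. Choose homogeneous coordinates so that $P = \{z_1 = z_2 = 0\}$, giving (iv); each leaf is then $L_t = \{\alpha(t) z_1 + \beta(t) z_2 = 0\}$ for real-analytic functions $\alpha, \beta$, and $H \setminus P = \pi^{-1}(E)$, where $\pi \colon \bP^n \setminus P \to \bP^1$ is the pencil projection and $E \subset \bP^1$ is the image of $C$ under $t \mapsto [\alpha(t):\beta(t)]$. The $1$-form $z_2\,dz_1 - z_1\,dz_2$ cuts out exactly the pencil of hyperplanes through $P$, and these contain all Levi leaves of $H^*$, so the associated singular holomorphic foliation on $\bP^n$ extends the Levi-foliation, giving (vi).

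The main obstacle is (v): showing that $E$ is real-algebraic in $\bP^1$, equivalently that $H$ is cut out by a bihomogeneous polynomial $\rho(z_1, z_2, \bar z_1, \bar z_2)$. This is not automatic, since general real-analytic subvarieties of $\bP^1$ need not be algebraic, so the global defining function $r$ must play the essential role. In local coordinates $(w_1, w_2, w')$ near a point of $P$, $H$ equals $\tilde E \times \bC^{n-2}$ locally, where $\tilde E \subset \bC^2$ is the affine cone over $E$. I would Taylor-expand $r$ at the origin; each coefficient in the $(w', \bar w')$ directions is a real-analytic function of $(w_1, w_2, \bar w_1, \bar w_2)$ vanishing on $\tilde E$. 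The crucial step is cone invariance: the substitution $(w_1, w_2) \mapsto (\lambda w_1, \lambda w_2)$ with $\lambda \in \bC^*$ preserves $\tilde E$, and separating the resulting identity by bidegrees in $\lambda$ and $\bar\lambda$ forces each bihomogeneous piece of the Taylor expansion to vanish on $\tilde E$ individually. Nontriviality of $r$ then produces a nonzero bihomogeneous polynomial $\rho$ vanishing on $\tilde E$; irreducibility of $C$ combined with a real-dimension count (both sides real codimension $1$) finally gives $H = \{\rho = 0\}$.
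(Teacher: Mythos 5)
The central gap is in parts (i)--(ii). You assert that every smooth point $p\in H^*$ lies on some hyperplane $L_t$, $t\in C$, and that $\overline{H^*}=\bigcup_{t\in C}L_t$. But $H$ is only the \emph{smallest} closed real-analytic subvariety containing $\bigcup_t L_t$; it can be strictly larger than that union (the Whitney-umbrella example and the footnote to (ii) exist precisely to make this point), and nothing in your argument rules out top-dimensional smooth points of $H$ lying off all the $L_t$, nor shows that at such points $H^*$ is Levi-flat with hyperplane leaves. This is exactly where the hypothesis of a global defining function $r$ must enter: the paper complexifies $\bP^n$ inside $\bP^n\times\bP^n$, complexifies $r$ on a Stein neighbourhood to obtain a complex variety $\sX\supset H$, and uses the identity principle on the regular part of $\sX$ to propagate both the integrability of $i(\partial r-\bar\partial r)$ and the vanishing of the full Hessian of $r$ on the complex tangent spaces from the open set where $H$ is visibly a union of hyperplanes to all of $H^*$; only then do the leaves through \emph{every} smooth point come out as hyperplanes. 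In your proposal $r$ is spent instead on algebraicity in the case $\dim H_s=2n-4$, which is precisely where the paper does \emph{not} need it: there algebraicity follows from the degenerate singularity via the local Chow-type bidegree argument (Lemma~\ref{lemma:alg}) and minimality of $H$, using only a local defining function. So the global hypothesis is used in the wrong place and the genuinely hard step is left unproved.

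Part (iii) is also not established. Your linear-algebra remark only shows that if for each $t$ the intersections $L_t\cap L_{t'}$ are independent of $t'$, then all hyperplanes share a common $\bP^{n-2}$. In the non-pencil case you then claim the pairwise intersections sweep out a $(2n-2)$-dimensional set, but the naive count gives only $2n-3$ (a one-parameter family of $(2n-4)$-dimensional planes inside a fixed $L_t$); excluding the value $2n-3$ is the actual content, and it requires an argument such as the two-family parametrization and rank computation of Lemma~\ref{lemma:findp}, or else running Lemma~\ref{lemma:32} to see that in the small-singularity case $H$ is a cone over a curve in $\bP^1$ so that $H_s$ is complex-analytic and of even dimension. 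Relatedly, in the pencil case your steps ``$H\setminus P=\pi^{-1}(E)$'' and ``$H_s=P$'' presuppose that $H$ equals the closure of the union of the leaves and that no hyperplane components (over singular points of the curve in $\bP^1$) occur; the paper obtains this only after proving $H$ is an algebraic cone. Finally, in (v), a single nonzero bihomogeneous polynomial vanishing on the cone over $E$ only yields $H\subset\{\rho=0\}$ by minimality; to get equality one must retain all bihomogeneous components of the defining function (or a suitable sum-of-squares combination), as in Lemma~\ref{lemma:alg}.
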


A smooth hypersurface is Levi-flat if and only if it contains a complex
hypersurface through every point.  Therefore, \ref{item:i} follows directly from
\ref{item:ii}.  The germ of the complex hypersurface through any point is
unique, and hence \ref{item:ii} says that the only complex hypersurfaces
in $H^*$ are complex hyperplanes.

A real-analytic subvariety that has a global defining real-analytic
function is called $\C$-analytic.
Equivalently such a subvariety is the real trace of a complex analytic subvariety of a
neighbourhood in the complexification.
See \cite{GMT:topics} for more information.
For example, an algebraic real-analytic subvariety of $\bP^n$ is
$\C$-analytic: Suppose $H \subset \bP^n$ is given in homogeneous
coordinates as the zero set of a
degree $(k,k)$ bihomogeneous polynomial $r(z,\bar{z})$, that is,
$r(\lambda z,\overline{\lambda z}) = \abs{\lambda}^{2k} r(z,\bar{z})$.
The expression
$\frac{r(z,\bar{z})}{\snorm{z}^{2k}}$ defines a real-analytic function
on all of $\bP^n$, whose zero set is $H$.
The theorem therefore holds for all real algebraic Levi-flat hypersurfaces,
though there exist many non-algebraic $\C$-analytic subvarieties in $\bP^n$ as
well.
With the techniques used in the proof, it is impossible to avoid the condition
that $H$ is contained in a $\C$-analytic set. 
It is, however, a natural requirement
as a non-$\C$-analytic subvariety may be
several unrelated subvarieties glued together, see e.g.\ \cite{Lebl:noteex}.

The function $\frac{z_1}{z_2}$ is the rational first integral of
the foliation given by $z_2 dz_1 - z_1 dz_2$.  In case $\dim H_s = 2n-4$,
in the $(z_1,z_2)$-space, $H$ is a complex cone over a nonsingular algebraic curve.
Note that homogeneous coordinates on $\bP^n$ are given up to
an automorphism of $\bP^n$, i.e.\ an invertible linear map of
$\C^{n+1}$.

If $H$ is induced by a curve in the Grassmanian,
we say 
a point $p \in H_s$ is \emph{degenerate} if infinitely many complex hyperplanes
in $H$ pass through $p$.  
If $\dim H_s = 2n-4$, then every point in $H_s$ is degenerate.
Our next result says that
if $H$ has a degenerate singularity then $H$ arises from an algebraic
surface in lower dimension.  A curve in $\bP^1$ has no
degenerate singularities.

\begin{thm} \label{thm:thm2}
Suppose $H \subset \bP^n$ is a real-analytic subvariety of
real codimension one
induced by an irreducible real-analytic curve in $G(n+1,n)$, further suppose $H_s$
contains a degenerate singular point.  Then
there exists an integer $1 \leq \ell < n$, homogeneous coordinates
$[z_0,\ldots,z_n]$, and a bihomogeneous real polynomial
$\rho(z_0,\ldots,z_\ell,\bar{z}_0,\ldots,\bar{z}_\ell)$, such that
\begin{enumerate}[(i)]
\item $H$ is the zero set of $\rho$ (in particular, $H$ is algebraic),
\item $\rho$ defines a Levi-flat algebraic hypersurface $X \subset \bP^\ell$
with no degenerate singularities
induced by a curve in $G(\ell+1,\ell)$.
\end{enumerate}
\end{thm}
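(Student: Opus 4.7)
The plan is to use the degenerate singular point to realize $H$ as the pullback of a hypersurface in a lower-dimensional projective space, and to iterate this reduction until we reach a base hypersurface with no degenerate singularities, at which point we extract a polynomial defining equation.

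First I would show that if $p \in H_s$ is degenerate, every complex hyperplane in the inducing curve $C$ passes through $p$. The set of hyperplanes through $p$ forms a closed subvariety of $G(n+1,n) \cong \bP^n$ isomorphic to $\bP^{n-1}$. Because $C$ is an irreducible real-analytic curve (of pure real dimension 1) meeting this subvariety in infinitely many points, $C$ is entirely contained in it. Choose homogeneous coordinates so that $p = [1:0:\ldots:0]$; then every hyperplane in $C$ has the form $a_1 z_1 + \cdots + a_n z_n = 0$, and is the preimage of a hyperplane in $\bP^{n-1}$ under the linear projection $\pi: \bP^n \setminus \{p\} \to \bP^{n-1}$, $[z_0:\ldots:z_n] \mapsto [z_1:\ldots:z_n]$. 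Thus $C$ descends to an irreducible curve $C' \subset G(n, n-1)$, and $H = \pi^{-1}(X) \cup \{p\}$, where $X \subset \bP^{n-1}$ is the real-analytic variety induced by $C'$.

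Next I would verify that $X$ inherits the hypotheses of the theorem in dimension $n-1$: the curve $C'$ is irreducible, and a global real-analytic defining function for $X$ is obtained by restricting $r$ to the linear subspace $\{z_0 = 0\} \cong \bP^{n-1}$ (after, if necessary, first dividing $r$ by the factor corresponding to the hyperplane $\{z_0 = 0\}$, in case it lies in $H$). Now iterate the reduction: if $X$ again contains a degenerate singular point, repeat the procedure. Since each step strictly decreases the ambient dimension, after finitely many iterations we reach $X \subset \bP^\ell$ for some $1 \leq \ell < n$ that has no degenerate singular points. By composition, $H$ is the pullback of $X$ under a linear projection centered on a $\bP^{n-\ell-1}$ spanned by the successive centers of projection, so in the resulting homogeneous coordinates the defining equation of $H$ depends only on $z_0, \ldots, z_\ell$ and their conjugates.

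The main obstacle is to upgrade the inherited $\C$-analytic defining function of $X$ on $\bP^\ell$ to a bihomogeneous polynomial $\rho$. Applying Theorem~\ref{thm:thm1} to $X$ forces $\dim X_s = 2\ell - 2$, since the alternative case $\dim X_s = 2\ell - 4$ has every singular point degenerate. The strategy I would pursue is to exploit the cone structure of $H$ over $X$: averaging the defining function $r$ over the compact group $U(n-\ell)$ acting unitarily on the apex coordinates yields a $U(n-\ell)$-invariant defining function $\hat r$, which depends only on $z_0, \ldots, z_\ell, \bar z_0, \ldots, \bar z_\ell$ and on the invariant $\sigma := |z_{\ell+1}|^2 + \cdots + |z_n|^2$. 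The cone structure forces $\hat r$ to vanish on $H$ independently of $\sigma$, so setting $\sigma = 0$ gives a defining function of $X$ on $\bP^\ell$; combining this with the $\C$-analyticity of $\hat r$ on $\bP^n$ (viewed as a bihomogeneous scale-invariant function on $\bC^{n+1}$), the resulting function on $\bC^{\ell+1}$ must be polynomial, since a bihomogeneous real-analytic function of fixed bidegree has only finitely many nonzero Taylor coefficients. This polynomial is the desired $\rho$, defining both $X$ in $\bP^\ell$ and, pulled back via the projection, $H$ in $\bP^n$.
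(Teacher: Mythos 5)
Your scheme (project away from the degenerate point, iterate, and only at the very end try to extract a polynomial) breaks down precisely at that last step, which is where all the work lies. Once you have reduced to $X\subset\bP^\ell$ with \emph{no} degenerate singularities, there is no source of bihomogeneity of fixed bidegree left: a real-analytic defining function on (a neighbourhood in) $\bP^\ell$, pulled back to $\C^{\ell+1}$, is merely invariant under $z\mapsto\lambda z$, not bihomogeneous of some bidegree $(j,k)$, so the remark that ``a bihomogeneous real-analytic function of fixed bidegree has finitely many Taylor coefficients'' does not apply; likewise, averaging over $U(n-\ell)$ produces an invariant real-analytic function, not a polynomial. Indeed, a hypersurface induced by a curve with singular set of dimension $2\ell-2$ and no degenerate singularities can be nonalgebraic (this is exactly the content of Lemma~\ref{lemma:construct} and Theorem~\ref{thm:nonalgexample}), so no soft argument at the bottom of your induction can yield algebraicity. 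In the paper the algebraicity is extracted \emph{at the degenerate point itself} (Lemma~\ref{lemma:alg}): in inhomogeneous coordinates centered there, $H$ is invariant under the complex scalings $z\mapsto\lambda z$, so writing a local defining function as $\rho=\sum_{jk}\rho_{jk}$ with $\rho_{jk}$ bihomogeneous of bidegree $(j,k)$ in the affine variables forces each polynomial $\rho_{jk}$ to vanish on the union of the hyperplanes through the point, and minimality/irreducibility of $H$ gives $H=\{\rho_{jk}=0 \text{ for all } j,k\}$, which is algebraic already after the first reduction; the iteration then only eliminates variables. Your proposal postpones this to a stage where the mechanism is no longer available, so the key step is missing.

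There are also smaller problems. Theorem~\ref{thm:thm2} does not assume a global defining function $r$ (that hypothesis belongs to Theorem~\ref{thm:thm1}), so restricting or averaging $r$, and applying Theorem~\ref{thm:thm1} to $X$, invokes hypotheses you do not have; the paper needs only a local defining function near the degenerate point. Moreover, ``degenerate'' means infinitely many hyperplanes \emph{contained in $H$} pass through $p$; these need not be points of the inducing curve $C$, so your first step (that $C$ lies in the Schubert $\bP^{n-1}$ of planes through $p$) needs an extra argument. Finally, the identity $H=\pi^{-1}(X)\cup\{p\}$ is asserted rather than proved: you must first show $H$ is a cone with vertex $p$ (this can be rescued via minimality, since the scalings about $p$ are automorphisms of $\bP^n$ fixing every hyperplane through $p$, or via the grading argument above), and you must address whether the cone is a real-analytic subvariety at the vertex, which is exactly the standard failure point for cones over real-analytic sets that the paper is organized around.
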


Theorem~\ref{thm:thm2} implies that a nonalgebraic $H$ has no degenerate singularities.
The assumption that the curve in $G(n+1,n)$ is irreducible is equivalent 
to assuming $H$ is induced by a nonsingular germ of a curve in $G(n+1,n)$.
The theorem also justifies our claim that a subvariety in $G(n+1,n)$ does
not necessarily induce a subvariety in $\bP^n$.  If we take a nonalgebraic
curve in $G(n+1,n)$ where the planes all go through a single point, then
there cannot be a proper subvariety of $\bP^n$ containing all the hyperplanes as
Proposition~\ref{prop:Csubvar} would imply it be nonalgebraic, and that
contradicts the theorem.

\begin{example}[Whitney umbrella with complex lines]\label{example:whitney}
Let us discuss a crucial algebraic example in $\C^2$ (and hence in $\bP^2$)
showing 
that $H$ itself may not be a union of complex
hyperplanes, and that the Levi-foliation need not extend.
Let $(z,w) \in \C^2$ be our coordinates and write $z=x+iy$ and $w=s+it$.
Consider the hypersurface $H$ given by
\begin{equation}
sy^2-txy-t^2 = 0 .
\end{equation}
The hypersurface is Levi-flat with the nonsingular
hypersurface part foliated by the complex lines given by $w=c z + c^2$,
where $c \in \R$.  Indeed, setting $z=\xi$ and $w=c \xi + c^2$ gives a
$\overline{H^*}$ as an image of $\C \times \R$.
This irreducible hypersurface
has a lower-dimensional totally-real component given by $\{ t=0 , y = 0 \}$,
``half of it'' (for $s$ sufficiently negative) sticking out of the hypersurface
as the ``umbrella handle.''  In the complement of $\{ y = 0 \}$, the
hypersurface $H$ is a graph:
$s = \frac{txy+t^2}{y^2}$ or $s = x \bigl( \frac{t}{y} \bigr) +
{\bigl(\frac{t}{y}\bigr)}^2$.  The set
$\{ t=0 , y=0 , s < \frac{-x^2}{4} \}$ is not in the closure of
$H^*$.
Therefore, not all of $H$ is a union of
complex lines; only the closure $\overline{H^*}$ is such a of complex lines, as claimed in 
Theorem~\ref{thm:thm1}.  
Notice that when $x=0$, we obtain the standard Whitney
umbrella in $\R^3$ \cite{Whitney:u}.

\begin{figure}[ht]
\includegraphics{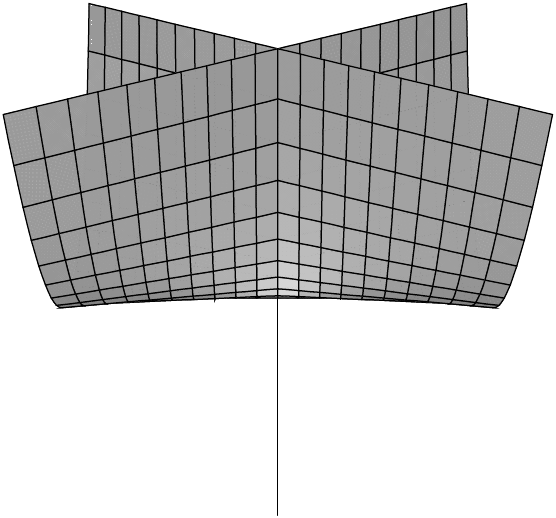}
\caption{The Whitney Umbrella in $\R^3$.\label{fig:whitney}}
\end{figure}

The singular set $H_s = \{ t=0 , y = 0, s \geq \frac{-x^2}{4} \}$ is of
real dimension 2, and it is a generic manifold at points 
arbitrarily near the origin.  Therefore,
the Levi-foliation
does not extend as a singular holomorphic foliation of a neighbourhood of the origin.
The Levi-foliation does extend as a holomorphic $2$-web given by
${dw}^2+z\, dz\, dw-w\,{dz}^2=0$.
Its multivalued
first integral is obtained by applying the quadratic formula to
$w=cz + c^2$.  See \cites{Brunella:lf,ShafikovSukhov}.
\end{example}

See \cite{Lebl:projlf} for an example of an umbrella-type subvariety
composed of complex lines where the ``umbrella handle'' is a complex line.

\begin{example}
A simple but illustrative
example is the hypersurface $H$ in $\bP^2$ given in
homogeneous coordinates $[z_0,z_1,z_2]$ as
\begin{equation}
\abs{z_1}^2=\abs{z_2}^2 .
\end{equation}
The singularity is the set $\{ z_1 = z_2 = 0 \}$; that is, a single point in
$\bP^2$, which is a degenerate singularity.  The singular holomorphic foliation extending the Levi-foliation
is given by the one-form $z_2 dz_1 - z_1 dz_2$ as claimed, and
$H$ is the cone over a circle in $\bP^1$.
\end{example}

One of the primary motivations of this paper is to find a nonalgebraic
singular Levi-flat hypersurface in projective space.
%
%
Previously in \cite{Lebl:projlf},
the author proved an analogue of Chow's theorem for Levi-flat hypersurfaces
under extra hypotheses:  A Levi-flat subvariety whose
Levi-foliation extends as a singular holomorphic foliation at each point and
has infinitely many compact leaves is contained in a real-algebraic
subvariety.  It should be noted that a real-algebraic Levi-flat
automatically has compact leaves.


\begin{thm} \label{thm:nonalgexample}
There exists a real-analytic singular Levi-flat hypersurface $H \subset \bP^2$
with all leaves complex hyperplanes,
such that $H^*$ (and hence $H$)
is not contained in any proper real-algebraic subvariety
of $\bP^2$.
\end{thm}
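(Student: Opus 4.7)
In the affine chart $\{z_0 = 1\}$ of $\bP^2$ with coordinates $(z_1,z_2) \in \bC^2$, I would define the family of complex lines
\begin{equation*}
L_t = \{(z_1,z_2) \in \bC^2 : z_1 + e^{it} z_2 + \exp(e^{it}) = 0\}, \quad t \in \bR,
\end{equation*}
parametrized by the compact real-analytic closed curve $C = \{[1 : e^{it} : \exp(e^{it})] : t \in \bR\}$ in the dual projective space $\bP^{2*} \cong G(3,2)$, and set $H = \overline{\bigcup_{t \in \bR} L_t} \subset \bP^2$. Each $L_t$ is a complex hyperplane by construction, so once $H$ is shown to be real-analytic, its Levi-flatness and line-leaf structure follow immediately.

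\textbf{Real-analyticity.} I would introduce the entire holomorphic function $G(\zeta, z_1, z_2) = z_1 + \zeta z_2 + \exp(\zeta)$, so that $(z_1, z_2) \in H \cap \{z_0 = 1\}$ iff $G(\cdot, z_1, z_2)$ has a zero on $S^1 = \{\zeta \in \bC : \zeta \bar\zeta = 1\}$. Fix $(z_1^0, z_2^0) \in H$ and $\zeta_0 \in S^1$ with $G(\zeta_0, z_1^0, z_2^0) = 0$. Applying Weierstrass preparation to $G$ at $(\zeta_0, z_1^0, z_2^0)$, the zeros of $G(\cdot, z_1, z_2)$ near $\zeta_0$ are the roots $\zeta_1(z_1,z_2), \ldots, \zeta_m(z_1,z_2)$ of a Weierstrass polynomial, depending algebroidally on $(z_1,z_2)$. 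The symmetric product
\begin{equation*}
F_{\zeta_0}(z_1, z_2, \bar z_1, \bar z_2) = \prod_{j=1}^{m} \bigl(\zeta_j(z_1,z_2)\, \overline{\zeta_j(z_1,z_2)} - 1\bigr)
\end{equation*}
is single-valued, real-analytic near $(z_1^0, z_2^0)$, and vanishes exactly when some $\zeta_j$ lies on $S^1$. Since only finitely many $\zeta_0 \in S^1$ contribute at each point of $H$, the product of the corresponding local $F_{\zeta_0}$ gives a real-analytic local defining function for $H$ in this chart; repeating the construction in the remaining affine charts of $\bP^2$ establishes $H$ as a closed real-analytic subvariety.

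\textbf{Non-algebraicity.} Suppose, for contradiction, that $H^* \subset V$ for some proper real-algebraic subvariety $V \subset \bP^2$ cut out by a bihomogeneous real polynomial $r(z, \bar z) \not\equiv 0$. Then $r$ vanishes identically on each $L_t$; parametrizing $L_t$ with a basis depending real-analytically on $t$, the bihomogeneous coefficients of the restriction $r|_{L_t}$ become fixed real polynomials in the coordinates $[1 : e^{it} : \exp(e^{it})]$ of $[L_t] \in \bP^{2*}$ and their complex conjugates, all of which must vanish for every $t \in \bR$. Setting $\zeta = e^{it}$ and extending by analytic continuation, this yields a finite family of identities $P(\zeta, \zeta^{-1}, \exp(\zeta), \exp(\zeta^{-1})) \equiv 0$ for all $\zeta \in \bC^*$. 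But $\zeta$, $\exp(\zeta)$, $\exp(1/\zeta)$ are algebraically independent over $\bC$: writing $P = \sum c_{ijk}\, \zeta^i \exp(j\zeta + k/\zeta)$, the asymptotics as $\zeta \to +\infty$ on $\bR^+$ separate terms by $j$ using linear independence of $\{\exp(j\zeta)\}_j$ over coefficients of polynomial growth, and then expanding $\exp(k/\zeta) = \sum_n (k/\zeta)^n/n!$ and matching powers of $1/\zeta$ gives moment equations on $k$ that force each $c_{ijk} = 0$. Hence $r \equiv 0$, contradicting $V \neq \bP^2$.

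\textbf{Main obstacle.} The central technical challenge is establishing real-analyticity of $H$: images of real-analytic maps are in general only subanalytic, so one must leverage the holomorphic structure of $G$ and Weierstrass preparation to produce genuine real-analytic local defining equations. Patching the local $F_{\zeta_0}$ coherently across $\bP^2$ — in particular across exceptional loci where several zeros of $G$ collide on $S^1$ or at the line at infinity — is delicate but relies on standard analytic techniques.
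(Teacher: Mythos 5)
There is a genuine gap at the step you yourself identify as the crux: the claim that $F_{\zeta_0}(z,\bar z)=\prod_{j=1}^m\bigl(\zeta_j(z)\overline{\zeta_j(z)}-1\bigr)$ is real-analytic. A function of the Weierstrass roots is guaranteed real-analytic only if it is symmetric \emph{separately} in the $\zeta_j$'s and in the $\bar\zeta_k$'s (then it is a polynomial in the Weierstrass coefficients and their conjugates); your product is only invariant under \emph{simultaneous} permutation of the pairs $(\zeta_j,\bar\zeta_j)$, and that is not enough at points where roots collide. Concretely, for a double root model $(\zeta-1)^2=z$ (collision at $\zeta_0=1\in S^1$ when $z=0$) the two roots are $1\pm\sqrt z$ and one computes $F=\abs{z}^2-z-\bar z-2\abs{z}$, which is not real-analytic at $z=0$ because of the $\abs{z}=\sqrt{z\bar z}$ term. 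For your $G(\zeta,z_1,z_2)=z_1+\zeta z_2+e^\zeta$ such collisions on $S^1$ really occur: whenever $\abs{\zeta_0}=1$, $z_2=-e^{\zeta_0}$, $z_1=(\zeta_0-1)e^{\zeta_0}$, the function $G(\cdot,z_1,z_2)$ has a double zero on the circle. These envelope points lie in the closure of your union of lines, so they cannot be avoided, and precisely there your local defining function fails to exist. The separately-symmetric substitute $\prod_{j,k}(\zeta_j\bar\zeta_k-1)$ is real-analytic but its zero set is strictly larger (it also vanishes when two roots are reflections of each other across $S^1$), so it does not define $H$ either. Thus real-analyticity of your $H$ is not established, and this is exactly the known danger: as the paper stresses, the union of hyperplanes over a real-analytic curve in the Grassmannian in general fails to be a subvariety, or even a semianalytic set. (The non-algebraicity half of your argument, via linear independence of $\zeta^i e^{j\zeta+k/\zeta}$, is plausible, but it is moot until $H$ is shown to be a subvariety; there is also a small unaddressed point in passing from vanishing on $H^*$ to vanishing on every $L_t$.)

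For comparison, the paper sidesteps this by choosing the family with \emph{real} coefficients: lines $z_0=xz_1+yz_2$ with $(x,y)$ on a compact nonalgebraic real-analytic curve $X\subset\R^2$. Then away from $\Delta=\{z_1\bar z_2=\bar z_1z_2\}$ one can solve for $(x,y)$ from $(z_0,\bar z_0)$, and the problematic points are absorbed by adjoining the fixed Levi-flat hypersurface $\Delta$ to the union of lines; a complexification plus finite-map/proper-mapping argument then shows the resulting set is a genuine real-analytic subvariety. Non-algebraicity follows by slicing with $\{z_1=r_1e^{i\theta_1},\,z_2=r_2e^{i\theta_2}\}$, $\theta_1\neq\theta_2$, which exhibits a nonalgebraic linear copy of $X$ in the slice. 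If you want to salvage your construction, you would need an analogous device (e.g.\ an auxiliary set absorbing the envelope/collision locus together with a proof that the enlarged set is analytic), and with transcendental complex coefficients such as $(e^{it},e^{e^{it}})$ it is not clear this can be done.
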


Therefore,
Chow's theorem does not hold for Levi-flat hypersurfaces without extra
assumptions.  Also, the hypotheses
in~\cite{Lebl:projlf} are not unnecessary;  the
nonalgebraic
hypersurface we construct has compact leaves, but the Levi-foliation does
not extend at certain points of its singular set.

We further construct an example of a real-analytic singular Levi-flat hypersurface
$H$ that is semialgebraic, but not algebraic.  That is, we construct an
irreducible algebraic Levi-flat hypersurface globally reducible into
two components as a real-analytic subvariety.

The author would like to acknowledge Xianghong Gong for suggesting
the original question.  The author would like to
thank Arturo Fern{\'a}ndez-P{\'e}rez for many 
discussions on the topic, and
several very useful suggestions and corrections to the manuscript.

\section{Real-analytic subvarieties induced by curves in the Grassmannian}

A real-analytic subvariety $H \subset \bP^n$
is \emph{algebraic} if it is the
zero set of a bihomogeneous polynomial.  
A set is \emph{semialgebraic}, if it is defined by
a finite set of algebraic equalities and inequalities.
A real-analytic subvariety can be semialgebraic, but not algebraic;
a real-analytic algebraic subvariety can be irreducible as an algebraic
subvariety, but reducible as a real-analytic subvariety.

Let us fix some notation.
Let $\sigma \colon \C^{n+1} \setminus\{0\} \to \bP^n$ be the natural projection.
If $X$ is a real-analytic subvariety of $\bP^n$, then let
$\tau(X) \subset \C^{n+1}$ to be the set of points $z \in \C^{n+1}$ such that
$\sigma(z) \in X$ or $z = 0$.
We say $Y \subset \C^{n+1}$ is a \emph{complex cone} if $z \in Y$ implies
$\lambda z \in Y$ for all $\lambda \in \C$.  For example, $\tau(X)$ is a
complex cone.  If $Y \subset \C^{n+1}$ is a complex cone we
write $\sigma(Y)$ for the induced set in $\bP^n$.

While $X \subset \bP^n$ may be a real-analytic subvariety, the set $\tau(X)$
need not be a subvariety.  The problem is at the origin, clearly $\tau(X)
\setminus \{ 0 \}$ is a subvariety of $\C^{n+1} \setminus \{ 0 \}$.
This is precisely where the proof of Chow's
theorem breaks down in the general real-analytic case.  Via
the standard proof of Chow's theorem (see e.g.\ \cite{Lebl:projlf}), if
$\tau(X)$ is a variety, then $X$ is algebraic.  If $\tau(X)$ is semianalytic
(in particular contained in a real-analytic subvariety of the same dimension), then 
$X$ is semialgebraic.

Before we do anything else, we prove the following proposition promised in
the introduction.  It means that every subvariety $H \subset \bP^n$ induces a
subvariety (possibly empty) in $G(n+1,n)$ of hyperplanes contained in $H$.  Therefore,
from now on we can assume the curve that induces our hypersurfaces is
a closed real-analytic subvariety of dimension 1.

\begin{prop} \label{prop:Csubvar}
Suppose $X \subset \bP^n$ is a proper real-analytic subvariety.  Then the
set $C \subset G(n+1,n)$ of complex hyperplanes contained in $X$ is a
real-analytic subvariety of $G(n+1,n)$ of dimension 1 or less.
Furthermore, if $X$ is algebraic, then $C$ is algebraic.
\end{prop}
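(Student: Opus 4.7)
The plan is to identify $G(n+1,n)$ with the dual projective space $(\bP^n)^*$, so that each $a = [a_0 : \cdots : a_n]$ parametrizes the complex hyperplane $H_a = \{z \in \bP^n : a_0 z_0 + \cdots + a_n z_n = 0\}$. First, $C$ is closed: if $a_j \to a$ with $H_{a_j} \subset X$, then every $z \in H_a$ is a limit of points $z_j \in H_{a_j}$, and the closedness of $X$ forces $z \in X$.

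The heart of the argument is to show that $C$ is locally real-analytic. Fix $a_0 \in C$. The compact hyperplane $H_{a_0}$ can be covered by finitely many affine charts $V_1, \ldots, V_N$ of $\bP^n$ on each of which $X$ is cut out by finitely many real-analytic functions $f_{j,i}$. For $a$ sufficiently near $a_0$, the hyperplane $H_a$ remains covered by the same $V_j$, and in each $V_j$ one can write an explicit parametrization $\phi_j(a, t)$ of $H_a \cap V_j$ depending real-analytically on $a$ and on a complex parameter $t \in \C^{n-1}$. The condition $H_a \subset X$ then amounts to the identical vanishing of $f_{j,i}(\phi_j(a, t))$ in $t$ for all $j$ and $i$, which after Taylor expansion at a point of $H_{a_0} \cap V_j$ becomes a countable family of real-analytic equations on $a$. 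The main obstacle lies in passing from countably many equations to finitely many; for this I would invoke the Noetherian property of the local ring of germs of real-analytic functions at $a_0$, so that the ideal generated by the family is finitely generated, and hence $C$ is locally the zero set of finitely many real-analytic functions.

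For the dimension bound $\dim_\R C \leq 1$, form the incidence $I_C = \{(a, z) \in C \times \bP^n : z \in H_a\}$ with projections $\pi_1 : I_C \to C$ and $\pi_2 : I_C \to \bP^n$. The fibers of $\pi_1$ are hyperplanes of real dimension $2n - 2$, so $\dim_\R I_C = \dim_\R C + 2n - 2$. The fiber of $\pi_2$ over $z$ is $C \cap H^*_z$, where $H^*_z \subset (\bP^n)^*$ is the real codimension-two hyperplane of linear forms vanishing at $z$; if $\dim_\R C \geq 2$, a standard transversality argument (or a direct tangent-space computation using the infinitesimal incidence $a_0(w) + v(z_0) = 0$) gives $\dim_\R(C \cap H^*_z) \leq \dim_\R C - 2$ for generic $z$, so that $\dim_\R \pi_2(I_C) \geq 2n$, contradicting $\pi_2(I_C) \subset X$ and $\dim_\R X \leq 2n - 1$.

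Finally, if $X$ is algebraic, i.e.\ $X = \{r = 0\}$ for a bihomogeneous polynomial $r(z, \bar z)$, choose (in each affine chart of $(\bP^n)^*$) a polynomial parametrization $\phi(a, t)$ of $H_a$. Then $r(\phi(a, t), \overline{\phi(a, t)})$ is a polynomial in $t, \bar t$ whose coefficients are polynomials in $a, \bar a$, and the condition $H_a \subset X$ is the vanishing of these finitely many coefficients, hence a system of bihomogeneous polynomial equations. Thus $C$ is algebraic.
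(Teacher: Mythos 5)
Your proposal is correct and takes essentially the same route as the paper: locally parametrize the hyperplanes near a given one, substitute into the local (resp.\ polynomial) defining functions of $X$, and read off real-analytic (resp.\ bihomogeneous polynomial) equations in the dual coordinates from the Taylor coefficients, with your appeal to Noetherianity of the ring of real-analytic germs and the finite covering of the compact hyperplane merely making explicit details the paper leaves implicit. The only divergence is in the dimension bound, where you run an incidence-variety fiber count while the paper simply notes that a two-real-parameter family of hyperplanes fills an open subset of $\bP^n$; these are reformulations of the same underlying fact (that such a family cannot sweep a set of dimension $\leq 2n-1$), which both you and the paper assert rather than verify by the short rank computation it requires.
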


\begin{proof}
Suppose $L$ is a hyperplane in $G(n+1,n)$ passing through a point $p \in
\bP^n$.
Without loss of generality, let $p$ be the origin in inhomogeneoous
coordinates given by $z_0 = 1$,
and suppose $L$ is given by $z_1 = 0$.
The hyperplanes in $G(n+1,n)$ in a neighbourhood of $L$ are given 
by the equation
\begin{equation}
z_1 = a_0 + a_2 z_2 + \cdots + a_n z_n
\end{equation}
for $a_j$ small.  The corresponding hyperplane is contained in $X$
if as a function of $z_2,\ldots,z_n$ and $\bar{z}_2,\ldots,\bar{z}_n$,
\begin{equation}
\rho( a_0 + a_2 z_2 + \cdots + a_n z_n, z_2, \ldots, z_n ,
\overline{a_0 + a_2 z_2 + \cdots + a_n z_n}, \bar{z}_2, \ldots, \bar{z}_n ) \equiv 0
\end{equation}
for all real-analytic $\rho$ that vanish on $X$ near $p$.  Expanding as a series in
$z_2,\ldots,z_n,\bar{z}_2,\ldots,\bar{z}_n$
\begin{equation}
\sum
c_{j_2,\ldots,j_n,k_2,\ldots,k_n}(a_0,a_2,\ldots,a_n,\bar{a}_0,\bar{a}_2,\ldots,\bar{a}_n)
z_2^{j_2} \cdots z_n^{j_n}
\bar{z}_2^{k_2} \cdots \bar{z}_n^{k_n},
\end{equation}
and setting its coefficients to 0, we obtain real-analytic equations
for $a_0$, $a_2$,\ldots, $a_n$.  The result follows.

The dimension claim follows easily; take a two-dimensional submanifold 
of $C(n+1,n)$ and note that the corresponding hyperplanes fill
an open subset of $\bP^n$.
Finally if $X$ is algebraic then take only polynomial $\rho$ above and
the equations giving $C$ are polynomial and hence $C$ is algebraic.
\end{proof}

The proposition has the following corollary.  If $H$ is induced by
an irreducible curve $C \subset G(n+1,n)$, then $H$ is irreducible.  If $H$
was reducible into two smaller components, at least one of them
contains all the complex hyperplanes from $C$.

We have the following special case of a well-known and easy observation.

\begin{prop}
Suppose $H \subset \bP^n$ is a real-analytic subvariety 
of real codimension one 
induced by a real-analytic curve in $G(n+1,n)$.  Then
$2n-4 \leq \dim H_s \leq 2n-2$.
\end{prop}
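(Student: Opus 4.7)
The plan is to prove the two bounds independently, each by a short and elementary argument; the upper bound follows from general facts about real-analytic subvarieties, while the lower bound comes from intersecting two distinct hyperplanes of the inducing family.

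For the upper bound $\dim H_s \leq 2n-2$ I would argue as follows. Since $H$ has real codimension one in $\bP^n$, its top-dimensional part is $(2n-1)$-dimensional, so $H^*$ is nonempty, open, and dense in $H$. The complement $H_s = H \setminus H^*$ is therefore a proper real-analytic subvariety of $H$, and the singular locus of a real-analytic subvariety of dimension $d$ is itself a subvariety of dimension strictly less than $d$. This already gives $\dim H_s \leq 2n-2$.

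For the lower bound $\dim H_s \geq 2n-4$ I would use the inducing curve directly. Since $C \subset G(n+1,n)$ is a $1$-dimensional real-analytic subvariety it contains infinitely many points, so I can select two distinct $t_1, t_2 \in C$ corresponding to distinct complex hyperplanes $L_1, L_2 \subset H$. Two distinct complex hyperplanes in $\bP^n$ meet in a complex projective subspace of complex codimension $2$, hence of real dimension $2n-4$. It therefore suffices to show $L_1 \cap L_2 \subset H_s$. Suppose, for contradiction, that some $p \in L_1 \cap L_2$ lies in $H^*$; then $H$ is a real-analytic submanifold of dimension $2n-1$ near $p$, so $T_p H$ is a real hyperplane in the $2n$-real-dimensional space $T_p \bP^n$. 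The inclusions $L_j \subset H$ give $T_p L_j \subset T_p H$, but $T_p L_1$ and $T_p L_2$ are distinct complex hyperplanes in $T_p \bP^n$ (two distinct complex hyperplanes through a point have distinct tangent spaces there), hence their real sum already fills $T_p \bP^n$ and cannot be contained in a real hyperplane.

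The only step that merits more than a dimension count is the standard fact that the singular locus of a real-analytic subvariety has strictly smaller dimension than the ambient variety; I would simply cite it. No serious obstacle is anticipated, which is consistent with the author's description of this proposition as an easy observation.
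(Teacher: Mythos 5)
Your proof is correct and takes essentially the same route as the paper: the lower bound comes from intersecting two distinct hyperplanes of the inducing family (a $(2n-4)$-dimensional set) and observing that a smooth hypersurface cannot contain two distinct complex hyperplanes through a point, while the upper bound is the standard fact that the set of non-manifold points of a real-analytic set has strictly smaller dimension. One minor caveat, at the same level of terseness as the paper itself: with the paper's definitions $H_s$ need not equal $H \setminus H^*$ and $H^*$ need not be dense in $H$ (see the Whitney umbrella example), but this is harmless here, since $H_s$ is always contained in the set of non-manifold points (giving $\dim H_s \le 2n-2$), and a point of $L_1 \cap L_2$ cannot be a manifold point of any dimension: dimension $2n-1$ is excluded by your tangent-space argument, and lower dimensions are excluded because $H$ contains both $(2n-2)$-dimensional hyperplanes $L_1$ and $L_2$ near that point.
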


\begin{proof}
A smooth hypersurface cannot contain two distinct complex hyperplanes through a
point.  Thus any point where two hyperplanes meet must be singular.
Any two complex hyperplanes in $\bP^n$ meet along a set
of complex dimension $n-2$ or real dimension $2n-4$.
\end{proof}


Before getting into the details of the proof of Theorem~\ref{thm:thm1}
let us start with complexification of $\bP^n$ as a real-analytic manifold.
We write $\sM$ for the obvious
complexification of $\bP^n$; let $z = (z_0,\ldots,z_n)$ be the homogeneous
coordinates, then we let $w = \bar{z}$ be a new variable
and we have $\sM \cong \bP^n \times \bP^n$.
Then $\bP^n$ is imbedded as a real-analytic totally-real submanifold
via $z \mapsto (z,\bar{z})$.
It is a classic result of Grauert (see also~\cite{GMT:topics}) that 
a real-analytic (paracompact)
manifold has a fundamental system of Stein neighbourhoods
in its complexification.
This result is not completely necessary in the proof below, but it
simplifies the proof.
This fact may seem a little counterintuitive
about $\bP^n$, but recall we are talking about $\bP^n$ only as
a real-analytic submanifold, ignoring the complex structure.

The first two claims of Theorem~\ref{thm:thm1} follow from the following
lemma.

\begin{lemma} \label{lemma:22}
Suppose $H \subset \bP^n$ is a real-analytic subvariety 
of real codimension 1,
induced by an irreducible real-analytic curve in $G(n+1,n)$.
Suppose there
exists a nontrivial real-analytic function $r$ defined in a neighbourhood
$V$ of $H$, such that $r$ vanishes on $H$.
Then
$\overline{H^*}$ is a union of complex hyperplanes.
\end{lemma}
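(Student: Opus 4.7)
The plan is to combine a local Levi-foliation analysis on $H^*$ with the compactness of the curve $C\subseteq G(n+1,n)\cong\bP^n$, exploiting the complexification of $r$ to transfer information between the family $\{L_t\}_{t\in C}$ and the smooth part of $H$.

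First I would establish that $H$ is Levi-flat and identify the Levi leaves at points of $(\bigcup_{t\in C}L_t)\cap H^*$. Because $H$ is induced by $C$, the union $\bigcup_{t\in C}L_t\subseteq H$ has real dimension $2n-1=\dim H$, while the preceding proposition gives $\dim H_s\leq 2n-2$, so there is a point $p_0\in L_{t_0}\cap H^*$ for some $t_0\in C$. Near $p_0$, $H$ is a smooth real hypersurface containing the complex hypersurface germ $L_{t_0}$, so the Levi form of $H$ vanishes at $p_0$ and the Levi leaf there is $L_{t_0}$. Since the Levi form is real-analytic on the top-dimensional smooth locus $H^*$, and $H^*$ is connected because $H$ is irreducible (the corollary to Proposition~\ref{prop:Csubvar}), the Levi form vanishes identically and $H^*$ carries a Levi foliation.

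Let $S\subseteq H^*$ denote the set of points whose Levi leaf is an open piece of a complex hyperplane; $S$ is nonempty by the above. To show $S$ is open, take $p\in S$ with leaf $L_{t_0}$ and choose local holomorphic coordinates $(z_1,\ldots,z_n)$ in which $H=\{\Im z_n=0\}$ and $L_{t_0}=\{z_n=0\}$ near $p$. For every $t\in C$ close to $t_0$, $L_t\subseteq H$ is a complex hypersurface contained in $\{\Im z_n=0\}$, so locally $L_t=\{z_n=c(t)\}$ for some real-valued real-analytic $c(t)$. Since $t\mapsto L_t$ is set-theoretically injective into $G(n+1,n)$, $c$ is non-constant and its local image is a real interval, so every nearby Levi leaf is some $L_t$. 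To show $S$ is closed, suppose $p_n\in S$ with leaves $L_{t_n}$ and $p_n\to p\in H^*$: compactness of $C$ (a closed subvariety of compact $\bP^n$) gives $t_n\to t_*\in C$ on a subsequence, so $p\in L_{t_*}\subseteq H$, and local uniqueness of the complex hypersurface germ at $p$ in $H$ identifies the Levi leaf through $p$ with $L_{t_*}$. Being open, closed, and nonempty in the connected $H^*$, $S=H^*$. Each $p\in H^*$ therefore lies on some $L_t\subseteq H$; as $L_t\cap H_s$ has real codimension at least two in $L_t$, the set $L_t\cap H^*$ is dense in $L_t$, and $L_t\subseteq\overline{L_t\cap H^*}\subseteq\overline{H^*}$, so $\overline{H^*}$ is a union of complex hyperplanes.

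The main obstacle is the openness step, specifically the non-constancy of the local real-analytic function $c(t)$, which reduces to the set-theoretic injectivity of the Grassmannian parametrisation. The role of the hypothesis on $r$ is essentially global: its complexification $\tilde r$ on a Stein neighbourhood of $\bP^n$ in $\sM$ lets one apply the identity theorem along each totally real diagonal $L_t\hookrightarrow L_t\times\overline{L_t}$ to conclude $\tilde r\equiv 0$ on $L_t\times\overline{L_t}$, tying the Segre variety of $H$ at any $p\in L_t$ to $L_t$ itself and making the leaf identification uniform. A secondary technical point is the connectedness of $H^*$, which relies on the standard fact that the smooth top-dimensional locus of an irreducible real-analytic variety is connected.
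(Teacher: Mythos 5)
Your proposal replaces the paper's complexification argument with an intrinsic open--closed argument on $H^*$, but two of its load-bearing steps fail, and both failures trace back to the fact that your argument never genuinely uses the hypothesis on $r$. First, the propagation of Levi-flatness: you obtain vanishing of the Levi form only at $p_0$ (or along the leaf germ $L_{t_0}$, a set with empty interior in $H^*$), and the identity theorem for real-analytic data requires vanishing on an \emph{open} subset of $H^*$; this part is repairable by choosing the base point as in the paper, where the implicit function theorem shows the union of the planes is a $(2n-1)$-dimensional manifold and hence coincides with $H^*$ locally. Second, and not repairable within your scheme: the claim that $H^*$ is connected because $H$ is irreducible is false for real-analytic varieties --- the cone $x^2+y^2=z^2$ in $\R^3$ is irreducible (any real-analytic function vanishing on one nappe vanishes on both), yet its top-dimensional smooth locus is disconnected. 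The corollary to Proposition~\ref{prop:Csubvar} gives irreducibility of $H$, nothing more. Both your identity-theorem step and the conclusion $S=H^*$ of your open--closed argument need this connectedness and have no substitute for it. This is exactly where $r$ enters the paper's proof: $r$ complexifies to a Stein neighbourhood of $\bP^n$ in $\sM$, its zero set $\sX$ is an irreducible \emph{complex} variety whose regular locus is connected, and both the integrability of $\theta=i(\partial r-\bar\partial r)$ and the vanishing of the full Hessian on the complexified tangent distribution propagate along $\sX$ by the identity theorem and then restrict back to a dense subset of $H^*$. The paper stresses that the $\C$-analyticity hypothesis cannot be avoided with these techniques; an argument that bypasses $r$ entirely (your mention of Segre varieties is never actually used) cannot reach components of $H^*$ that are not connected to the leaves through the smooth locus.

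There is also a gap in your last step. You assert that $L_t\cap H_s$ has real codimension at least two in $L_t$; but in general $\dim H_s=2n-2$ (this is the generic case, e.g.\ for the nonalgebraic examples of Section~5), so no such codimension bound is available. Moreover, points of $L_t\setminus H_s$ need not lie in $H^*$: they can be smooth points of $H$ of dimension $2n-2$ only, exactly like the umbrella handle in Example~\ref{example:whitney}, and such points genuinely fail to be in $\overline{H^*}$. To conclude $L\subset\overline{H^*}$ you need the paper's sweeping argument: near a point of $H^*$ the nearby leaves form a nonsingular curve in $G(n+1,n)$, and by the implicit function theorem the corresponding hyperplanes fill out a $(2n-1)$-dimensional manifold inside $H$ over an open dense subset of $L$; since $\dim H_s\le 2n-2$, such points are limits of points of $H^*$, which gives $L\subset\overline{H^*}$.
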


\begin{proof}
Take a small smooth piece of the curve.
In homogeneous coordinates on $\bP^n$, we write the
corresponding planes 
(after perhaps reordering coordinates) as
\begin{equation}
z_0 = a_1(t) z_1 + \cdots + a_n(t) z_n .
\end{equation}
where $a_j(0) = 0$ for all $j$, but $a_k'(0) \not= 0$ for at least one $k$.
By implicit function theorem, for most points on the leaf $z_0 = 0$, we can
solve for $t$, and hence near such points the union of the planes is a $2n-1$ dimensional
real submanifold.
In particular, there is a point $q \in H^*$, a neighbourhood of which in $H^*$ is locally a union
of complex hypersurfaces.

We complexify as above and consider $\bP^n \subset \sM$ as a
totally-real subset.
Let $r$ and $V$ be as given.
This real-analytic function complexifies to
some neighbourhood $U$ of $H$ in the complexification $\sM$.
As $V$ is a real-analytic submanifold it has a fundamental system of Stein
neighbourhoods in $\sM$ and we can therefore assume that $U$ is Stein.
The zero set
of $r$ is a complex analytic subvariety $\sX$ of $U$ such that
$H \subset \sX \cap \bP^n$.
Assume $\sX$ is irreducible (as $H$ is irreducible).
As $\sX$ is complex analytic in a Stein domain, it is coherent.
We can assume 
the complexified $r$ to be the defining function for $\sX$
such that for every $p \in \sX$, the function $r$ generates the ideal
$I\bigl((\sX,p)\bigr)$
of germs at $p$ of holomorphic functions vanishing on the germ $(\sX,p)$.
In particular, the derivative $dr$ vanishes only on the singular locus of $\sX$.
The singular locus of $\sX$ intersected with $H$ is a proper subvariety of
$H$ of dimension less than $2n-1$.  Hence $dr$ vanishes on a subvariety of
$H$ of dimension less than $2n-1$.  

Let $(H^*)' = \{ z \in H^* : dr(z,\bar{z}) \not= 0 \}$.  The set $(H^*)'$ is
dense and open in $H^*$.
Therefore we assume that $q \in (H^*)'$.
The real form $\theta = i(\partial r - \bar{\partial} r)$ defines the space
$T_q^{(1,0)} H \oplus T_q^{(0,1)} H$.  This form complexifies and also
defines a (complex) codimension 1 subspace $\sT$
on the tangent space of $\sX$ at smooth points, which agrees with
$T_p^{(1,0)} H \oplus T_p^{(0,1)} H$ at all $p \in (H^*)'$.  As a form on
$H^*$, $\theta$ is integrable near $q$ (as $H$ is Levi-flat there),
then the complexified $\theta$ is integrable on the regular points of $\sX$
and hence on all of $(H^*)'$.

So $H^*$ is Levi-flat, we now need
to show that the leaves are locally complex hyperplanes.
It is again enough to do so on a dense subset of $H^*$.  To make things
simpler let us move to inhomogeneous coordinates with $z_0 = 1$, and note
that $H^* \cap \{ z_0 = 1 \}$ is dense in $H^*$.
Compute the Hessian (the full real Hessian) $Hr$ of $r$, and note
that it vanishes on
$T_q^{(1,0)} H \oplus T_q^{(0,1)} H$ for the point $q \in (H^*)'$ where
$H$ is a union of complex hyperplanes.
The function $Hr$ also complexifies (the
complexification is the Hessian of the complexified $r$).
The complexified $Hr$ is identically zero on the vectors in $\sT$ at all
points in $\sX$ near $(q,\bar{q})$.
By identity $Hr$ vanishes on all the vectors in $\sT$ at all the
regular points of $\sX$ for which neither $z_0 = 0$ nor $w_0 = 0$
(the set of regular points of $\sX$ is connected).

Then at all points $p \in (H^*)' \setminus \{ z_0 = 0 \}$,
the Hessian $Hr$ also vanishes on
$T_p^{(1,0)} H \oplus T_p^{(0,1)} H$.  The leaves of the Levi-foliation near
$p$ must then be complex hyperplanes (no curvature).
That means the leaves near all points of $H^*$ are complex hyperplanes.

Take a complex hyperplane $L$ such that an open neighbourhood of it
is in $H^*$.  Clearly $L \subset H$.
What is left to show is that $L$ is in the closure of $H^*$.
Suppose we pick a point in $H^*$ near which the leaves are complex hyperplanes.
At such a point, taking a nonsingular curve $\gamma$ in $H^*$ transverse
to the leaves, we obtain that the leaves nearby clearly form a small nonsingular curve
in $G(n+1,n)$.
By the argument in the beginning of the proof, on a open dense subset
of $L$, the set $L$ is part of a smooth $2n-1$ dimensional manifold.  In
other words, the leaf is in the closure $\overline{H^*}$.

The limit set of complex planes is a union of complex planes, and hence
$\overline{H^*}$ is a union of complex planes.
\end{proof}

\section{Degenerate singularities}

To prove 
Theorem~\ref{thm:thm2}, it is enough to prove the following lemma.
The lemma will also be useful in the proof of Theorem~\ref{thm:thm1}.

\begin{lemma} \label{lemma:alg}
Suppose $H \subset \bP^n$ is a real-analytic subvariety of
real codimension one
induced by an irreducible real-analytic curve in $G(n+1,n)$ and suppose
$H_s$ has a degenerate singularity.  Then there 
exist homogeneous coordinates $[z_0,\ldots,z_n]$ such that
$H$ has a real polynomial defining function depending on $z_0$
through $z_{n-1}$ only.
\end{lemma}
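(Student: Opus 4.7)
The plan is to force a $\bC^*$-action under which $H$ must be invariant, use this to show that $H$ is a complex cone with apex $p$, and then extract the desired polynomial from the bihomogeneous Taylor decomposition of a local real-analytic defining function.

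First I would argue that every hyperplane of $C$ passes through $p$. The set of parameters $t\in C$ such that the hyperplane $L(t)$ contains $p$ is a real-analytic subset of $C$, and by the degeneracy assumption it is infinite; since $C$ is an irreducible real-analytic curve (of dimension one), any infinite real-analytic subset must be all of $C$. Choose homogeneous coordinates so that $p=[0:\cdots:0:1]$; then every hyperplane of $C$ is cut out by an equation involving only $z_0,\ldots,z_{n-1}$, and the map $T_\lambda\colon [z_0:\cdots:z_n]\mapsto [z_0:\cdots:z_{n-1}:\lambda z_n]$ fixes each such hyperplane setwise for every $\lambda\in\bC^*$. Since $H$ is defined as the smallest real-analytic subvariety containing those hyperplanes, $T_\lambda(H)$ also contains them, so minimality gives $T_\lambda(H)\supseteq H$; applying $T_{1/\lambda}$ yields $T_\lambda(H)=H$. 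Letting $\lambda\to 0$ and $\lambda\to\infty$ in the $T_\lambda$-orbit of any $q\in H\setminus\{p\}$ and using closedness of $H$ shows the whole complex projective line through $p$ and $q$ lies in $H$, so in the affine chart with coordinates $w=(z_0/z_n,\ldots,z_{n-1}/z_n)$ and $p$ at the origin, $H$ is a complex cone with apex $0$.

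Now let $r(w,\bar w)$ be a real-analytic function vanishing on the germ of $H$ at $0$ with zero set equal to $H$ locally (such an $r$ exists, possibly as a sum of squared moduli of generators of the local ideal). Decompose $r=\sum_{a,b\geq 0} r_{a,b}$ into its bihomogeneous parts, where $r_{a,b}(\lambda w,\mu\bar w)=\lambda^a\mu^b r_{a,b}(w,\bar w)$. For any $w\in H$ near $0$, the cone structure gives $\lambda w\in H$ for every sufficiently small $\lambda\in\bC$, whence
\begin{equation}
\sum_{a,b}\lambda^a\bar\lambda^b\, r_{a,b}(w,\bar w) \;=\; r(\lambda w,\bar\lambda\bar w) \;=\; 0
\end{equation}
identically in $\lambda$. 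Viewing the left-hand side as a real-analytic function of the independent variables $(\lambda,\bar\lambda)$, uniqueness of the power-series expansion forces $r_{a,b}(w,\bar w)=0$ for every pair $(a,b)$. Bihomogeneity of $r_{a,b}$ together with the cone structure of $H$ then propagates this vanishing from a neighborhood of $0$ to all of $H$ in the affine chart.

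Reinterpreted in homogeneous coordinates, each $r_{a,b}$ is a bihomogeneous polynomial $\tilde r_{a,b}(z_0,\ldots,z_{n-1},\bar z_0,\ldots,\bar z_{n-1})$ with no dependence on $z_n$ or $\bar z_n$, whose zero set in $\bP^n$ contains $H\cap\{z_n\neq 0\}$ and hence (by density, since the hyperplanes of $C$ are not contained in $\{z_n=0\}$) all of $H$. The common zero set $Z=\bigcap_{a,b}\{\tilde r_{a,b}=0\}$ is an algebraic cone with apex $p$ containing $H$; near $p$ it equals $H$ because the $r_{a,b}$ collectively recover $r$, and since both $Z$ and $H$ are complex cones with the same apex this local equality upgrades to a global one. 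Combining finitely many $\tilde r_{a,b}$ into a single real bihomogeneous polynomial such as
\begin{equation}
\rho \;=\; \sum_{a,b} \lvert \tilde r_{a,b}\rvert^{2}\,\bigl(\lvert z_0\rvert^2+\cdots+\lvert z_{n-1}\rvert^2\bigr)^{K-a-b},
\end{equation}
with $K$ the maximum total degree, then yields a real polynomial defining function for $H$ depending only on $z_0,\ldots,z_{n-1}$. The step I expect to require the most care is this last passage from ``$\tilde r_{a,b}$ vanishes on $H$'' to ``the $\tilde r_{a,b}$ together cut out exactly $H$'' globally; the cone structure rather than any purely algebraic argument is what makes it work.
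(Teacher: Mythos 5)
Your proposal uses the same core engine as the paper -- a Chow-type bihomogeneous decomposition of a local real-analytic defining function at the degenerate point, combined with minimality of $H$ among subvarieties containing the hyperplanes -- but you arrive at the cone structure by a different route. The paper never shows directly that $H$ is a cone at the outset: it decomposes $\rho=\sum\rho_{jk}$, observes that each $\rho_{jk}$ vanishes on the union $V$ of the hyperplanes through the point (which is a cone by construction), defines $\widetilde H$ as the common zero set, shows $\widetilde H\subset H$ by scaling a point of $\widetilde H$ into the germ of $H$ and using that $H$ restricted to a projective line through the origin cannot contain an open set without containing the whole line, and only then concludes $H=\widetilde H$ from minimality. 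You instead first prove $T_\lambda(H)=H$ for the $\C^*$-action fixing every hyperplane through $p$ (again by minimality) and deduce that $H$ itself is a complex cone with apex $p$, so the bihomogeneous parts vanish on all of $H$ near $p$ rather than just on $V$. Both arrangements are sound; yours isolates the cone structure cleanly, while the paper's avoids needing it a priori.

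Two steps in your write-up need tightening, though neither is fatal. First, the assertion that the zero set $Z$ of the $\tilde r_{a,b}$ contains ``all of $H$ by density'' presumes $H\cap\{z_n\neq0\}$ is dense in $H$, which is not automatic: $H$ may have lower-dimensional pieces not contained in $\overline{H^*}$ (the umbrella handle of Example~\ref{example:whitney}), and a priori such a piece could lie inside $\{z_n=0\}$. The correct argument is the one you already have in hand: each hyperplane $L$ of $C$ passes through $p$, so $L\cap\{z_n\neq0\}$ is dense in $L$ and the polynomials vanish on all of $L$; thus $Z$ is a real-algebraic (hence real-analytic) subvariety containing every hyperplane of $C$, and minimality of $H$ gives $H\subseteq Z$. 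Second, the bihomogeneous decomposition of a real-analytic $r$ has infinitely many parts in general, so your final sum over all $(a,b)$ is not a polynomial as written and ``$K$ the maximum total degree'' need not exist; you must invoke Noetherianity (finitely many of the $\tilde r_{a,b}$ already generate the relevant ideal and cut out $Z$) before applying your sum-of-squares-with-norm-weights trick to produce the single real bihomogeneous defining polynomial in $z_0,\ldots,z_{n-1}$. The paper is equally brief on this last packaging step, but your argument should record the finiteness reduction explicitly.
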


To obtain Theorem~\ref{thm:thm2}, that is, to obtain an $X$ with no nondegenerate
singularities, we repeatedly apply the lemma, noting 
a curve in $\bP^1$ cannot have
degenerate singularities.
Then for the $\ell$ in the theorem
there exists an $X \subset \bP^\ell$ defined by the 
polynomial in $z_0$ through $z_\ell$,
such that $\tau(H) = \tau(X) \times \C^{n-\ell}$.
To prove the lemma, it is enough to show that
in inhomogeneous coordinates where the origin is the degenerate singularity,
$H$ is a complex cone defined by a bihomogeneous polynomial,
since then we have eliminated one variable from the defining equation
and we have shown algebraicity.

\begin{proof}
Let us work in inhomogeneous coordinates defined by $z_0 = 1$, and suppose
the degenerate point is the origin.
We proceed as in the standard proof of Chow's theorem.
Let $\rho$ be a real-analytic defining function for $H$ near the origin.
Decompose $\rho$
into bihomogeneous components as
\begin{equation}
\rho(z,\bar{z}) = \sum_{jk} \rho_{jk}(z,\bar{z}).
\end{equation}
We have infinitely many complex hyperplanes passing through the origin.


Let $V$ be the union of the hyperplanes through the origin contained in
$H$ near the origin.  If $z$ in $V$, then $\lambda z \in V$
for all $\lambda  \in \C$, and so
\begin{equation}
0 = \rho(\lambda z,\bar{\lambda }\bar{z}) = \sum_{jk} \lambda^j
\bar{\lambda}^k \rho_{jk}(z,\bar{z}) .
\end{equation}
So for each $j$ and $k$, $\rho_{jk}(z,\bar{z}) = 0$ for $z \in V$.
Let $\widetilde{H}$ be the algebraic variety defined by
$\rho_{jk}(z,\bar{z}) = 0$ for all $j$ and $k$.  Clearly as germs at the origin
$(\widetilde{H},0) \subset (H,0)$.  Furthermore $\widetilde{H}$ must be of real dimension $2n-1$,
and $\widetilde{H}$ is complex cone as it is defined by bihomogeneous
polynomials.

If $z \in \widetilde{H}$, then $\lambda z \in \widetilde{H}$ for all
$\lambda \in \C$.  For small $\lambda$, $\lambda z \in H$ as 
$\widetilde{H} \subset H$ as germs at the origin.  But as $H$ is a
subvariety we have that $\lambda z \in H$ for all $\lambda \in \C$, in
particular if $\lambda = 1$, and so $z \in H$.
We have shown that $\widetilde{H} \subset H$.  As the curve of complex
hyperplanes in $H$ is irreducible and $H$ is the smallest subvariety
contining those planes $\widetilde{H} = H$.
Hence $H$ is algebraic.

Finally, as $H$ is a complex cone in $\C^n$
it induces a subvariety $X \subset \bP^{n-1}$.  In the right coordinates
$\tau(H) = \tau(X) \times \C$.  The lemma follows.
\end{proof}

We remark the lemma implies that to study hypersurfaces induced by
curves in the Grassmannian, it is enough to study those that have no
degenerate singularities.

\section{Extending the foliation when the singularity is small}

Given a subvariety $C \subset G(n+1,n)$, let $S \subset \bP^n$ be the set of points
where two or more complex hyperplanes in $C$ intersect.  The set $S$ is
given in homogeneous coordinates by $\{ z \in \C^{n+1} : z \cdot a = z \cdot
b = 0 , [a]
\not= [b], [a] \in C, [b] \in C \}$.  That is a subanalytic set, see e.g.\ \cite{BM:semisub}.

\begin{lemma} \label{lemma:findp}
Let $C \subset G(n+1,n)$, $n \geq 2$, be an irreducible real-analytic
subvariety of real dimension one.
Let $S \subset \bP^n$ be the set of points
where two or more complex hyperplanes in $C$ intersect.  If $\dim_{\R} S
\leq 2n-3$, then after an automorphism of $\bP^n$, every hyperplane in $C$
is given by an equation of the form
\begin{equation}
0 = a_1 z_1 + a_2 z_2,
\end{equation}
and the set $S$ is given by the equations $z_1 = z_2 = 0$.  In particular,
$\dim_{\R} S = 2n-4$.
\end{lemma}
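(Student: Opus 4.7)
The plan is to work in the dual projective space $(\bP^n)^*$, where $C$ becomes a real-analytic curve of real dimension one, and the key reduction is to show that $C$ lies in a single complex projective line of $(\bP^n)^*$. Such a line is exactly a pencil of hyperplanes sharing a fixed complex $(n-2)$-plane, so once this is established, choosing homogeneous coordinates so that the common $(n-2)$-plane is $\{z_1=z_2=0\}$ immediately yields the claimed form $a_1 z_1 + a_2 z_2 = 0$ for every hyperplane in $C$, and any two $\C$-linearly independent such forms meet exactly in $\{z_1=z_2=0\} = \bP^{n-2}$, of real dimension $2n-4$.

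Parameterize a smooth arc of $C$ by a real-analytic map $a\colon (-\epsilon,\epsilon) \to \C^{n+1} \setminus \{0\}$ with $L(t) = \{z : a(t) \cdot z = 0\}$, and for pairs $(t_1,t_2)$ with $a(t_1), a(t_2)$ $\C$-linearly independent set
\[
\Psi(t_1,t_2) = \operatorname{span}_\C\bigl(a(t_1), a(t_2)\bigr) \in G(n+1,2),
\]
so that $L(t_1) \cap L(t_2) = \Psi(t_1,t_2)^{\perp}$ has real dimension $2n-4$. A standard incidence argument---the condition $V \subset p^\perp$ cuts out a real codimension $4$ subvariety $G(p^\perp,2)$ inside $G(n+1,2)$, so over a generic $p \in S$ the fiber of the incidence $\{(V,p): V \in \im(\Psi),\, p \in V^\perp\} \to \bP^n$ is finite---yields $\dim_\R S = \dim_\R \im(\Psi) + (2n-4)$. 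The hypothesis $\dim_\R S \leq 2n-3$ therefore forces $\dim_\R \im(\Psi) \leq 1$.

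The crux is to upgrade this to $\dim_\R \im(\Psi) = 0$ via a fiber dichotomy. A fiber $\Psi^{-1}(V)$ equals $(A_V \times A_V) \setminus \Delta$, where $A_V = \{t : a(t) \in V\}$ is the common zero set of the real-analytic scalar functions representing $a(t)$ modulo $V$; on a connected arc $A_V$ is either discrete or the entire arc, so $\Psi^{-1}(V)$ is either $0$-dimensional or $2$-dimensional. If $\dim_\R \im(\Psi) = 1$, then a real-analytic map from a real $2$-dimensional domain would have generic fibers of dimension $1$, which is now excluded; hence $\dim_\R \im(\Psi) \in \{0,2\}$, and combined with the bound $\leq 1$ we conclude $\dim_\R \im(\Psi) = 0$. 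Thus $\Psi$ is constant with some value $V_0 \in G(n+1,2)$; by real-analytic continuation and irreducibility of $C$ this persists globally, so every hyperplane in $C$ contains the fixed complex $(n-2)$-plane $V_0^{\perp}$. Picking homogeneous coordinates adapted to $V_0$ finishes the proof. The main obstacle is the fiber dichotomy: it is the essential analytic input that exploits real-analyticity (not merely smoothness) of the curve to rule out the intermediate fiber dimension and thereby collapse the $\leq 1$ bound to $=0$.
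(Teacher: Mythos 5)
Your overall strategy is genuinely different from the paper's: the paper fixes two smooth points of the curve, writes the two corresponding local families of hyperplanes explicitly in coordinates, parametrizes their pairwise intersection, and shows by a direct rank computation that unless all coefficients except the two pencil coefficients vanish identically, the intersection already sweeps out a set of dimension at least $2n-2$. You instead pass to the secant--plane map $\Psi$ into $G(n+1,2)$ and convert the hypothesis on $\dim_\R S$ into a statement about $\dim_\R \im(\Psi)$, then use the real-analytic fiber dichotomy to exclude the value $1$. That dichotomy step is fine, and the final reduction (constant $\Psi$, hence a pencil, hence the normal form and $S=\{z_1=z_2=0\}$, globalized by irreducibility) is also fine.

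The gap is in the incidence dimension count $\dim_\R S = \dim_\R \im(\Psi) + (2n-4)$, and specifically in the direction you actually use, namely $\dim_\R S \geq \dim_\R \im(\Psi) + (2n-4)$. Your justification --- that $\{V : V \subset \operatorname{Ann}(p)\}$ has real codimension $4$ in $G(n+1,2)$, ``so'' the fiber of the incidence set over a generic $p \in S$ is finite --- is a non sequitur: the codimension of $G(\operatorname{Ann}(p),2)$ in the ambient Grassmannian says nothing about its intersection with the (at most $2$-dimensional) set $\im(\Psi)$, which a priori could be positive-dimensional for $p$ ranging over a top-dimensional part of $S$. Note that positive-dimensional fibers do occur in the situations this paper is concerned with: the fiber over $p$ is governed by $T_p=\{t : a(t)\cdot p = 0\}$, and if all hyperplanes of the arc pass through $p$ (a degenerate point, as in Lemma~3.1 and the example $\abs{z_1}^2=\abs{z_2}^2$) the fiber is all of $\im(\Psi)$. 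The correct repair is a second application of your own dichotomy: for fixed $p$, the real-analytic set $T_p$ is either discrete or the whole arc; in the discrete case the fiber over $p$ is $0$-dimensional, and in the other case $p$ lies in the common intersection $\bigcap_t L(t)$, a complex linear set which either has complex dimension $n-2$ (and then you are already in the pencil case, with $\im(\Psi)$ a point) or has real dimension at most $2n-6$ and therefore cannot carry the top-dimensional part of the incidence set. Only after this case analysis do you get generic finiteness of the fibers over $S$, hence the inequality $\dim_\R S \geq \dim_\R \im(\Psi)+(2n-4)$ needed to conclude $\dim_\R \im(\Psi)\leq 1$. With that step supplied, your argument is a valid alternative to the paper's coordinate computation.
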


\begin{proof}
Let $z_0,z_1,\ldots,z_n$ be the homogeneous coordinates on $\bP^n$.  An
automorphism of $\bP^n$ is an invertible linear transformation in $n+1$
variables.
Pick two distinct smooth points on $C$.
Work in inhomogeneous coordinates given by $z_0 = 1$.
Without loss of generality, after a change of
coordinates, assume the two
hyperplanes are given by $z_1 = 0$ and $z_2 = 0$.  We parametrize
the curve $C$ near the two points and hence the corresponding 
hyperplanes as
\begin{align}
& z_1 = a_0(s) + a_2(s) z_2 + a_3(s) z_3 + \cdots + a_n(s) z_n ,
\\
& z_2 = b_0(t) + b_1(t) z_1 + b_3(t) z_3 + \cdots + b_n(t) z_n .
\end{align}
The functions $a_k$ and $b_k$ are real-analytic functions of one
real variable defined near the origin and vanishing at $0$.

The hypotheses of the lemma are that the intersection of these two
families of planes is of dimension less than or equal to $2n-3$.  Furthermore we can
assume that as a vector valued function, the $a$ and the $b$ is
non-constant.  That is, not all $a_k$ are identically zero, and similarly not
all $b_k$ are identically zero.  We will show that 
$a_0$ and $a_3,\ldots,a_n$ must be identically zero, and 
$b_0$ and $b_3,\ldots,b_n$ must be identically zero.  That is, only $a_2$
and $b_1$ are allowed to vary (they must vary in fact).  Once this is
proved, the lemma holds.

Let us suppose for contradiction the above assertion is not true.
In particular,
after a linear change of variables in $z_0$, $z_3,\ldots,z_n$, assume
$b_0$ and $a_0$ are not both identically zero.  By symmetry,
assume it is $a_0$ that is not identically zero.  If $b_0$ is
identically zero after a generic linear change of variables as above,
then $b_3$ through $b_n$ are identically
zero as well and in that case $b_1$ is not identically zero.

Solving for $z_1$ and $z_2$, we obtain a piece of $S$ parametrized by $s$,
$t$, and $z_3,\ldots,z_n$ as
\begin{align}
& z_1 = \frac{a_0(s) + a_2(s)
\bigl(b_0(t) + b_3(t) z_3 + \cdots + b_n(t) z_n\bigr) + a_3(s) z_3 + \cdots + a_n(s) z_n}{1-a_2(s)b_1(t)} ,
\\
& z_2 = \frac{b_0(t) + b_1(t) \bigl(a_0(s) + a_3(s) z_3 + \cdots +
a_n(s) z_n \bigr)  + b_3(t) z_3 + \cdots + b_n(t) z_n}{1-a_2(s)b_1(t)} .
\end{align}
where $z_3,\ldots,z_n$ simply parametrize themselves.  As dimension of
the image is less than or equal to $2n-3$, it must be true when we set $z_3$ through $z_n$
to zero, the rank of the resulting mapping (rank of the derivative)
\begin{align}
& z_1 = \frac{a_0(s) + a_2(s) b_0(t)}{1-a_2(s)b_1(t)} ,
\\
& z_2 = \frac{b_0(t) + b_1(t) a_0(s)}{1-a_2(s)b_1(t)} ,
\end{align}
must be of rank strictly less than $2$ at all points.  Clearly this is possible if both
$a_0$ and $b_0$ are both identically 0, however, we supposed for
contradiction $a_0$ is not identically zero.

If neither $a_0$ nor $b_0$ is identically zero, then the
mapping is a finite map.  That is, solving
$a_0(s) + a_2(s) b_0(t) = 0$, and 
$b_0(t) + b_1(t) a_0(s) = 0$ for $s$ and $t$ near zero implies that both
$a_0$ and $b_0$ must be zero.  A finite map has generically full rank, that
is 2, which contradicts our assumption on rank.

The final case to check is when $b_0$ is identically zero.
The mapping becomes
$\frac{a_0(s)}{1-a_2(s)b_1(t)}$ and
$\frac{b_1(t) a_0(s)}{1-a_2(s)b_1(t)}$, and that is easily seen to have
derivative of rank 2 for generic points $s,t$ near the origin if
$b_1$ is not identically zero.  By assumption, $b_1$ is not identically zero.

By irreducibility of $C$ we find that if $S$ is contained in all complex hyperplanes
in some open set in $C$, then $S$ is contained in all complex hyperplanes
in $C$.
\end{proof}

The following lemma proves the remainder of the claims in
Theorem~\ref{thm:thm1}.

\begin{lemma} \label{lemma:32}
Suppose $H \subset \bP^n$ is a real-analytic subvariety of
real codimension one
induced by an irreducible real-analytic curve in $G(n+1,n)$ and suppose 
$\dim H_s \leq 2n-3$.  Then
after an automorphism of $\bP^n$,
there exist homogeneous coordinates $[z_0,\ldots,z_n]$ such that
\begin{enumerate}[(i)]
\item $H$ has a real polynomial defining function depending on $z_1$ and $z_2$ only.
\item The holomorphic foliation given by $z_2 dz_1 - z_1 dz_2$ extends the
Levi-foliation of $H^*$.
\item The set $H_s$ is given by $z_1 = z_2 = 0$ and so in particular
$\dim_{\R} H_s = 2n-4$.
\end{enumerate}
\end{lemma}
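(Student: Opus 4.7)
The plan is to apply Lemma~\ref{lemma:findp} to fix projective coordinates, iterate Lemma~\ref{lemma:alg} to reduce the number of variables in a polynomial defining function down to two, and then verify the claims about the foliation and the singular set by direct calculation.

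First I would observe that the intersection set $S$ from Lemma~\ref{lemma:findp} satisfies $S \subseteq H_s$, because a smooth real hypersurface cannot be smooth at a point lying on two distinct complex hyperplanes. Hence $\dim S \le 2n-3$, and Lemma~\ref{lemma:findp} supplies homogeneous coordinates $[z_0 : \cdots : z_n]$ in which every hyperplane of the inducing curve has the form $a_1 z_1 + a_2 z_2 = 0$ and $S = \{z_1 = z_2 = 0\}$. In these coordinates every point of $S$ lies on every hyperplane of $C$ and is therefore a degenerate singular point of $H$.

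Next I would apply Lemma~\ref{lemma:alg} at the degenerate point $[1 : 0 : \cdots : 0]$. The proof of that lemma, carried out in the chart $z_0 = 1$ with this point as origin, shows that $H$ admits a bihomogeneous polynomial defining function independent of $z_0$, exhibiting $H$ as a cone over a subvariety $X_1 \subset \bP^{n-1}$ (coordinates $[z_1 : \cdots : z_n]$). The hyperplanes of $X_1$ are again $a_1 z_1 + a_2 z_2 = 0$, so the degenerate-subspace structure persists. Iterating --- picking a degenerate point outside the axes already eliminated, relabeling coordinates so it becomes $[1 : 0 : \cdots : 0]$, invoking Lemma~\ref{lemma:alg}, and then relabeling back --- I would strip off the variables $z_n, z_{n-1}, \ldots, z_3$ one at a time, arriving at a real bihomogeneous polynomial defining function for $H$ depending only on $z_1$ and $z_2$. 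This proves (i).

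For (ii), the leaves of the Levi-foliation of $H^*$ are exactly the hyperplanes $\{a_1 z_1 + a_2 z_2 = 0\}$ for $[a_1 : a_2] \in C \subset \bP^1$; these are the level sets of $z_1/z_2$, which is a rational first integral of the $1$-form $z_2\, dz_1 - z_1\, dz_2$, so that $1$-form defines a singular holomorphic foliation of $\bP^n$ extending the Levi-foliation. For (iii), I already have $S \subseteq H_s$. For the reverse inclusion I would consider the holomorphic submersion $\phi : \bP^n \setminus S \to \bP^1$, $\phi(z) = [z_1 : z_2]$, and the real-algebraic curve $C_0 \subset \bP^1$ cut out by the two-variable polynomial produced above, so that $H \setminus S = \phi^{-1}(C_0)$. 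If $C_0$ were singular at some point $q$, the preimage $\phi^{-1}(q)$ would lie in $H_s$ and would have real dimension $2n-2$, contradicting $\dim H_s \le 2n-3$. Hence $C_0$ is smooth, $\phi^{-1}(C_0)$ is a smooth real hypersurface, $H_s \subseteq S$, and combining the two inclusions gives $H_s = \{z_1 = z_2 = 0\}$ of real dimension $2n-4$. The main obstacle I foresee is the coordinate bookkeeping in the iterative step: at each application of Lemma~\ref{lemma:alg} one must shuffle coordinates so the targeted degenerate point becomes the origin of an affine chart and then shuffle back so that the pencil form $a_1 z_1 + a_2 z_2 = 0$ is preserved for the next round. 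Once that is handled, the remaining arguments reduce to elementary dimension-counting with the submersion $\phi$.
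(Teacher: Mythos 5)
Your proposal is correct and follows essentially the paper's own route: Lemma~\ref{lemma:findp} (whose hypothesis you rightly justify via $S \subseteq H_s$) to put every hyperplane of the pencil in the form $a_1 z_1 + a_2 z_2 = 0$, Lemma~\ref{lemma:alg} at a degenerate point to get algebraicity, the rational first integral $z_1/z_2$ of $z_2\,dz_1 - z_1\,dz_2$ for (ii), and the dimension count forcing the curve in $\bP^1$ to be nonsingular for (iii). The only divergence is in one step and is harmless: where you iterate Lemma~\ref{lemma:alg} over the coordinate points of $S$ to strip $z_0, z_3,\ldots,z_n$ (equivalently, use that $H$ is a cone with vertex at each point of $S$, which is the cleanest way to settle the coordinate bookkeeping you flag), the paper applies Lemma~\ref{lemma:alg} once and then fixes $z_0, z_3,\ldots,z_n$ at generic values to produce the defining polynomial in $z_1, z_2$ only.
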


\begin{proof}
Let $C \subset G(n+1,n)$ be the subvariety defined by all complex
hyperplanes in $H$.  Let $C_1$ be an irreducible one-dimensional
component of $C$.
Use Lemma~\ref{lemma:findp} to find the coordinates such that
all hyperplanes in $C_1$ are given by linear functions of $z_1$ and $z_2$.
In particular all the hyperplanes pass through the origin in the
inhomogeneous coordinates given by $z_0 = 1$.

The origin is a degenerate singularity and therefore by Lemma~\ref{lemma:alg},
$H$ is algebraic and defined by a bihomogeneous polynomial.  


The foliation is given by the one-form $z_2 dz_1 - z_1 dz_2$ with the
meromorphic first integral $\frac{z_1}{z_2}$, as the leaves of this
foliation are precisely the complex hyperplanes given by linear functions of
$z_1$ and $z_2$ only.  This foliation then clearly extends the Levi-foliation
of $H$.

Let us now work in homogeneous coordinates $z_0,\ldots,z_n$.  The cone
$\tau(H)$ is an algebraic subvariety in $\C^{n+1}$.  By fixing $z_0$, and
$z_3,\ldots,z_n$ at a generic value we obtain a real polynomial in $z_1$ and $z_2$ that
vanishes on a set of real dimension 3 in the $(z_1,z_2)$-space.  Since it
must vanish on the hyperplanes in $H$ it vanishes on $H$.  We obtain
our defining polynomial depending on $z_1$ and $z_2$ only.
Hence $H$ is a complex cone over a curve in $\bP^1$.

The singular set of $H$ is therefore either given by the singular set of
the foliation, which is precisely $z_1 = z_2 = 0$, or by the complex hyperplanes
that correspond to the singular points of
the curve in $\bP^1$.  That is, $H_s$ is a union of the set
$\{ z_1 = z_2 = 0 \}$ and a finite number of complex hyperplanes
corresponding to the singular points of the curve.  That is $H_s$ is
a complex subvariety and hence by assumption on dimension
it must be of real dimension $2n-4$
and cannot contain any complex hyperplanes.
So the curve in $\bP^1$
must be nonsingular and $H_s = \{ z_1 = z_2 = 0 \}$.
\end{proof}

\section{The construction of a nonalgebraic hypersurface}

In this section we construct
a real-analytic singular Levi-flat hypersurface $H \subset \bP^2$
with all leaves compact, such that $H^*$
is not contained in any proper real-algebraic subvariety
of $\bP^2$.  That is, the only bihomogeneous polynomial vanishing on
all of $\tau(H^*)$ and hence on $\tau(H)$ is identically zero.  The $H$ we construct will be induced
by a nonalgebraic real-analytic curve in $G(3,2)$.

\begin{lemma} \label{lemma:construct}
Let $X \subset \R^2$ be a connected compact real-analytic curve
with no singularities.
Let $\widetilde{H}$ be the complex cone defined by
\begin{equation}
\widetilde{H} = \{ (z_0,z_1,z_2) \in \C^3 : z_0 = z_1 x + z_2 y \text{ where $(x,y) \in X$}
\} \cup \{ z \in \C^3 : z_1\bar{z}_2 = \bar{z}_1z_2 \} .
\end{equation}
Then $\widetilde{H}$ is a subvariety in $\C^{3} \setminus \{ 0 \}$.  In
other words the induced set $\sigma(\widetilde{H})$ is a real-analytic
subvariety of $\bP^2$.
\end{lemma}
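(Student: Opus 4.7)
The plan is to verify that $\widetilde H$ is locally a real-analytic subvariety near every point $p \in \C^3 \setminus \{0\}$. Decompose $\widetilde H = F \cup S_0$, where $F$ denotes the first component (the union of complex hyperplanes) and $S_0 = \{z_1 \bar z_2 = \bar z_1 z_2\}$ the second. The set $S_0$ is cut out by the single real-analytic equation $\Im(\bar z_1 z_2) = 0$, hence is real-analytic. Since $X$ is compact, the parametrizing map $\Phi \colon X \times \C^2 \to \C^3$, $(x,y,z_1,z_2) \mapsto (z_1 x + z_2 y, z_1, z_2)$, is proper and real-analytic, so its image $F$ is closed. Set $D(z, \bar z) = \Im(\bar z_1 z_2)$, so that $S_0 = \{D = 0\}$.

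First I would handle points off $S_0$ via Cramer's rule. Splitting $z_0 = z_1 x + z_2 y$ into real and imaginary parts produces a real $2 \times 2$ linear system whose determinant is $D$; away from $S_0$ this is invertible and yields real-analytic functions $g_1, g_2 \colon \C^3 \setminus S_0 \to \R$ solving the system. Then $F \cap (\C^3 \setminus S_0) = g^{-1}(X)$, a real-analytic subvariety of $\C^3 \setminus S_0$ because $X$ is a real-analytic subvariety of $\R^2$. This disposes of every $p \notin S_0$. For $p \in S_0 \setminus F$ with $p \ne 0$, the closedness of $F$ gives a neighborhood disjoint from $F$, so $\widetilde H$ locally equals $S_0$ and is real-analytic there.

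The main task, and main obstacle, is a point $p \in F \cap S_0$ with $p \ne 0$. Such $p$ must have $(p_1, p_2) \ne 0$; write $(p_1, p_2) = \mu(\alpha, \beta)$ and $p_0 = \mu \gamma$ for some $\mu \in \C \setminus \{0\}$ and $(\gamma, \alpha, \beta) \in \R^3$. The set of $(x,y) \in X$ with $p_0 = p_1 x + p_2 y$ is then $X \cap L_p$ for the real line $L_p = \{\alpha x + \beta y = \gamma\}$, which is finite since a compact real-analytic curve in $\R^2$ cannot contain a line. For each $(x_i, y_i) \in X \cap L_p$, parametrize $X$ locally by a real-analytic arc $t \mapsto (x_i(t), y_i(t))$ and consider $\Phi_i(t, z_1, z_2) = (z_1 x_i(t) + z_2 y_i(t), z_1, z_2)$. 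A direct rank computation shows $\Phi_i$ is an immersion at $(0, p_1, p_2)$ precisely when $p_1 x_i'(0) + p_2 y_i'(0) \ne 0$; at such immersion points the implicit function theorem produces a local real-analytic defining equation $\rho_i = 0$ for the image of $\Phi_i$, and then $\rho = D \cdot \prod_i \rho_i$ locally defines $\widetilde H$. The hard case is the non-immersion situation $p_1 x_i'(0) + p_2 y_i'(0) = 0$, a tangential incidence between $L_p$ and the motion of the hyperplane; there the parametric elimination degenerates and one needs a Weierstrass-preparation or resolution-of-singularities style argument to produce $\rho_i$ as a genuine real-analytic function rather than merely a subanalytic one. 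I expect this last step to be where the real technical work of the proof lies, and I would also expect the role of the added set $S_0$ to be precisely to absorb the semianalytic boundary behavior of $F$ that arises in these degenerate fibers.
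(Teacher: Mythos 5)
Your reduction is fine as far as it goes: the Cramer's-rule argument off $\Delta = \{\Im(\bar z_1 z_2)=0\}$, the case $p\in\Delta\setminus F$, and the immersive incidences at $p\in F\cap\Delta$ all match what the paper does or follow routinely. But the proof stops exactly at the crux. The tangential (non-immersion) case $p_1x_i'(0)+p_2y_i'(0)=0$ is not a technical loose end that Weierstrass preparation or resolution of singularities settles generically: the image of a real-analytic map is in general only semianalytic, not a subvariety (this is precisely the Whitney-umbrella phenomenon the paper discusses in Example~\ref{example:whitney}), so invoking elimination machinery can at best give you that $F$ is locally contained in some real-analytic set, and says nothing about whether the excess points you pick up belong to $\widetilde H$. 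The entire content of the lemma is the identification of that excess with the second piece $\Delta$, and your proposal only conjectures (``I would expect the role of the added set $S_0$ to be precisely to absorb\dots'') rather than proves this.

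For comparison, the paper's argument at such a point is concrete: after rotating so that $p=(\ast,1,0)$ and passing to inhomogeneous coordinates $z_1=1$, $z_2=\zeta$, one complexifies the local parametrization $(\zeta,t)\mapsto\bigl(a(t)+\zeta b(t),\zeta\bigr)$ in both variables ($\bar\zeta=\omega$, $\bar t=s$, $t$ now complex). The complexified map is finite, hence proper, so by the proper mapping theorem the image of $\{s=t\}$ is a complex-analytic subvariety; restricting to the real diagonal $\zeta=\bar\omega$ gives the equation $\bigl(a(t)-\overline{a(t)}\bigr)+\zeta\bigl(b(t)-\overline{b(t)}\bigr)=0$, which forces either $t\in\R$ (so the point lies in the parametrized image $F$) or $\zeta\in\R$ (so the point lies in $\Delta$), using $b'(0)\neq 0$. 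That is the step that shows $F\cup\Delta$, and hence $\widetilde H$, is locally a genuine real-analytic subvariety at the degenerate incidences, and it is missing from your write-up. To complete your proof you would need to carry out this complexification-and-proper-mapping argument (or an equivalent explicit elimination) and verify that the extra points it introduces lie in $\Delta$; without it the statement is not established.
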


\begin{proof}
Let $X \subset \R^2$ be as above.
Suppose $a,b \colon \R \to X$ are two real-valued real-analytic functions of
a real variable $t$ such that the image of $\bigl(a(t),b(t)\bigr)$ is all of
$X$.
As $X$ has no singularities, 
we assume the derivatives $a'(t)$ and $b'(t)$ do not both vanish
at any $t \in \R$.

Define a complex cone $\widetilde{H}$ as
\begin{equation}
\widetilde{H} = \{ (z_0,z_1,z_2) \in \C^3 : z_0 = z_1 x + z_2 y \text{ where $(x,y) \in X$}
\} \cup \{ z \in \C^3 : z_1\bar{z}_2 = \bar{z}_1z_2 \} .
\end{equation}
Clearly $\widetilde{H}$ is a complex cone and so $H = \sigma(\widetilde{H})$ is a
well-defined subset of $\bP^2$.  If $\widetilde{H} \setminus \{ 0 \}$ is a real-analytic
subvariety of $\C^3 \setminus \{ 0 \}$, then $H$ is a real-analytic
subvariety of $\bP^2$.  Assuming $\widetilde{H}$ is a real-analytic
subvariety away from the origin then $H$ is Levi-flat,
as $\widetilde{H}$ is a union of complex
hyperplanes.  In fact, $H$ is induced by a
nonsingular real-analytic curve in $G(3,2)$.

It is therefore left to show $\widetilde{H}$ is a subvariety away from
the origin.
We write
\begin{equation}
\begin{bmatrix}
z_0 \\ \bar{z}_0
\end{bmatrix}
=
\begin{bmatrix}
z_1 & z_2 \\ \bar{z}_1 & \bar{z}_2
\end{bmatrix}
\begin{bmatrix}
x \\ y
\end{bmatrix} .
\end{equation}
If the matrix is invertible we solve for $x$ and $y$ in terms of $z$.
Outside the set
\begin{equation}
\Delta = \{ z : z_1\bar{z}_2 - \bar{z}_1z_2  = 0 \} ,
\end{equation}
the set $\widetilde{H}$ is an image of a
real-analytic submanifold under a real-analytic diffeomorphism, and
therefore a subvariety.

Let us take a point on $\widetilde{H}$ where
$z_1\bar{z}_2 - \bar{z}_1z_2  = 0$.  Call this point $p = (z_0^0,z_1^0,z_2^0)$.
The complex values $\bar{z}_1^0$ and $\bar{z}_2^0$ lie on the same line
through the origin in $\C$.  After moving all of $\C^3$ by a diagonal unitary
we assume $z_1^0$ and $z_2^0$ are real.  As they are not both zero, we
find a $2 \times 2$ real orthogonal matrix $U$
that takes the point $(1,0)$ to $(z_1^0,z_2^0)$.
Write $A(t) = \bigl(a(t),b(t)\bigr)$.  Then near $p$ and outside of
$\Delta$, the set
$\widetilde{H}$ is the image of
\begin{equation}
(\xi_1,\xi_2,t) \mapsto
\bigl((U\xi \cdot A(t)), U\xi \bigr)
=
\bigl((\xi \cdot U^tA(t)), U\xi \bigr) ,
\end{equation}
where $\xi_1,\xi_2$ are complex and $t$ real.
Again by moving around (post-composing the above mapping) by an orthogonal
mapping we assume 
$z_1^0 = 1$ and 
$z_2^0 = 0$.

We now work in inhomogeneous coordinates in $\C^2$
where we set $z_1=1$ and write $z_2 = \zeta$.
The set $\Delta$ is the set where $\zeta$ is
real-valued.  It is
enough to consider the image of the map
\begin{equation}
(\zeta,t) \mapsto \bigl(a(t)+\zeta b(t), \zeta \bigr)
\end{equation}
for $\zeta \in \C$ near the origin and $t \in \R$.  The question
is local and 
so it is enough to consider $t$ near the origin.  We must show
the image union $\Delta$ is a real-analytic subvariety.

We complexify by letting $\bar{\zeta} = \omega$
and $\bar{t} = s$, treating $t$ as a complex variable.
As neither $a$ nor $b$
vanish identically,
the complexified mapping (the coefficients of $a$
and $b$ are real)
\begin{equation}
(\zeta,t,\omega,s) \mapsto \bigl(
a(t)+\zeta b(t),
\zeta,
a(s)+\omega b(s),
\omega
\bigr)
\end{equation}
is a finite map near the origin.
Hence it is a proper holomorphic mapping between two
neighbourhoods of the origin.  By proper mapping theorem this map takes
the set $s=t$ to a complex analytic subvariety of a neighbourhood of the
origin.  We restrict to the ``diagonal,'' to what this image is intersected
with the original $\C^2$ before complexification:
\begin{equation}
\zeta = \overline{\omega}, \qquad
a(t)+\zeta b(t)
=
\overline{
a(s)+\omega b(s)
} .
\end{equation}
We substitute
$\zeta = \overline{\omega}$ and $s=t$ into the right hand side of the second
equation:
$\overline{
a(s)+\omega b(s)
}
=
a(\bar{t})+ z b(\bar{t})$.  So
\begin{equation}
\bigl( a(t) - a(\bar{t}) \bigr) + \zeta  \bigl( b(t) - b(\bar{t})\bigr) = 0
.
\end{equation}
As 
$a(t) - a(\bar{t}) = a(t) - \overline{a(t)}$ and $b(t) - b(\bar{t}) = b(t) -
\overline{b(t)}$ are purely imaginary for all $t \in \C$,
either 
$a(t) - \overline{a(t)}$ and
$b(t) - \overline{b(t)}$ are both zero, or $\zeta$ is real.
So $\widetilde{H}$ is contained in a real subvariety locally.
To show that $\widetilde{H}$ is a subvariety at $p$,
we need to show that the complexified image does not introduce any points
on ``the diagonal'' that are not in $\widetilde{H}$.

The derivatives of $a$ and $b$ do not both vanish, so 
without loss of generality $b'(0) \not= 0$.  Then 
$b(t) - \overline{b(t)} = 0$ implies $t$ is real.  Therefore
the image of the complexified mapping lies in $\C^2$ (on the diagonal), if
either $t$ is real or if $\zeta$ is real.  A point where
$t$ is real is in the image of
$\bigl(a(t) + \zeta b(t),\zeta\bigr)$ for a
real $t$ (and therefore in $\widetilde{H}$). A point where
$\zeta$ is real is in $\Delta$ and therefore in $\widetilde{H}$.  Either way
the image of $\bigl(a(t) + \zeta b(t),\zeta\bigr)$ for a real $t$ together
with $\Delta$ is a subvariety.  Therefore, $\widetilde{H}$ is a subvariety.
\end{proof}

We now use this construction to prove Theorem~\ref{thm:nonalgexample}.

\begin{proof}[Proof of Theorem~\ref{thm:nonalgexample}]
Let $X \subset \R^2$ be a connected compact real-analytic curve not contained
in any proper real-algebraic subvariety of $\R^2$.  That is, any bihomogeneous
polynomial $p(x,y)$ vanishing on $X$, vanishes identically.

Construct $\widetilde{H}$ as in Lemma~\ref{lemma:construct}.
To prove the theorem, we need to show that a polynomial vanishing on
$\overline{\widetilde{H}^*}$ is the zero polynomial.

Fix two real distinct numbers $\theta_1$ and $\theta_2$
and two positive real numbers $r_1$ and $r_2$.
Then the set $\overline{\widetilde{H}^*} \cap \{ z \in \C^3 :
z_1 = r_1 e^{i\theta_1},
z_2 = r_2 e^{i\theta_2} \}$
contains the set
\begin{equation}
X' = \{ z \in \C^3 :
z_0 = r_1 e^{i\theta_1} x + r_2 e^{i\theta_2} y \text{ where $(x,y) \in
X$} ,
z_1 = r_1 e^{i\theta_1},
z_2 = r_2 e^{i\theta_2} 
 \} .
\end{equation}
$X'$ is a linear image of $X$ under a nonsingular linear mapping.
As $X$ is nonalgebraic then $X'$ is nonalgebraic, and hence
any real polynomial $P$ that vanishes on $\widetilde{H}^*$ vanishes on $X'$
and hence vanishes on the entire set
$\{ z \in \C^3 : z_1 = r_1 e^{i\theta_1}, z_2 = r_2 e^{i\theta_2} \}$.
By varying $\theta_1$ and $\theta_2$ we get that $P$ vanishes on an open
subset of $\C^3$ and thus vanishes identically.
\end{proof}

In the proof of the lemma,
note that not all of $\Delta$ was needed, it
was only used to cover the points where the image of the map could change
depending on what neighbourhood of the complexified variables we took in the
local argument above.
In fact $\Delta$ is an entire Levi-flat hypersurface.  We only need to
take the smallest subvariety that includes $\widetilde{H}
\setminus \Delta$.  Then the subvariety induced by $X$ is the set
$\{ z \in \C^3 : z_0 = z_1 x + z_2 y \text{ where $(x,y) \in X$} \}$ together
with a subset of $\Delta$. 

\begin{cor}
There exists a singular Levi-flat hypersurface $H \subset \bP^2$
that is a semialgebraic set, but is not real-algebraic.  That is, there
exists a real-algebraic Levi-flat subvariety of codimension 1 in $\bP^2$ that is
irreducible as a real-algebraic subvariety, but reducible as a real-analytic
subvariety into two components of codimension 1.
\end{cor}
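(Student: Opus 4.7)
The plan is to apply the construction of Lemma~\ref{lemma:construct} to each of the two compact real ovals of a suitably chosen irreducible algebraic curve in $\R^2$. A concrete choice is the real quartic
\begin{equation}
X_{\mathrm{alg}} = \bigl\{ (x,y) \in \R^2 : y^2 + (x^2-1)(x^2-4) = 0 \bigr\},
\end{equation}
whose real locus is the disjoint union of two nonsingular compact ovals $X_1$ (over $1 \leq x \leq 2$) and $X_2$ (over $-2 \leq x \leq -1$). Applying Lemma~\ref{lemma:construct} separately to $X_1$ and to $X_2$ produces two Levi-flat real-analytic subvarieties $H_1, H_2 \subset \bP^2$ of real codimension one. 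Each $H_i$ is irreducible as a real-analytic subvariety by the corollary to Proposition~\ref{prop:Csubvar}, and $H_1 \neq H_2$ because the complex hyperplanes they parametrize in $G(3,2)$ are pairwise distinct. Thus $H := H_1 \cup H_2$ is a real-analytic Levi-flat subvariety of $\bP^2$ with precisely two codimension-one real-analytic components.

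Next I would produce the ambient real-algebraic hypersurface. Following the end of the proof of Lemma~\ref{lemma:construct}, let
\begin{equation}
A := z_0\bar z_2 - \bar z_0 z_2, \qquad B := \bar z_0 z_1 - z_0 \bar z_1, \qquad C := z_1\bar z_2 - \bar z_1 z_2
\end{equation}
be the three signed $2 \times 2$ minors of the matrix with rows $(z_0,z_1,z_2)$ and $(\bar z_0,\bar z_1,\bar z_2)$. Outside $\Delta := \{C = 0\}$ we may solve $x = A/C$ and $y = B/C$, so substituting into the equation of $X_{\mathrm{alg}}$ and clearing the denominator $C^4$ yields the real bihomogeneous polynomial of bidegree $(4,4)$
\begin{equation}
P(z,\bar z) := B^2 C^2 + (A^2 - C^2)(A^2 - 4 C^2).
\end{equation}
Set $\tilde H := \{P = 0\} \subset \bP^2$. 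Then $\tilde H \supset H$ by construction, while on $\Delta$ the equation $P = 0$ reduces to $A^4 = 0$, so $\tilde H \cap \Delta = \{A = C = 0\}$ has codimension at least two in $\bP^2$; therefore the codimension-one real-analytic components of $\tilde H$ are exactly $H_1$ and $H_2$.

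The main obstacle is to show that $P$ is irreducible in $\R[z,\bar z]$, which would make $\tilde H$ irreducible as a real-algebraic subvariety. Viewed as a quadratic in $B$ over $\C[A,C]$, the polynomial $P$ has discriminant $-4C^2(A-C)(A+C)(A-2C)(A+2C)$; since the quartic factor $(A-C)(A+C)(A-2C)(A+2C)$ is squarefree in $\C[A,C]$, this discriminant is not a square in $\C(A,C)$, and $P$ is irreducible in $\C[A,B,C]$. The polynomial map $\phi : \C^6 \to \C^3$, $(z,\bar z) \mapsto (A,B,C)$ is dominant with smooth irreducible generic fiber isomorphic to $SL(2,\C)$, since two $2 \times 3$ matrices have the same ordered triple of signed $2\times 2$ minors if and only if they differ by left multiplication by an element of $SL(2,\C)$. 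The preimage $\phi^{-1}\{P = 0\}$ of the irreducible reduced hypersurface $\{P = 0\}\subset\C^3$ is therefore irreducible and reduced in $\C^6$, which means that $P(z,\bar z)$ is irreducible in $\C[z,\bar z]$ and \emph{a fortiori} in $\R[z,\bar z]$. With irreducibility established, $\tilde H$ is the desired irreducible real-algebraic Levi-flat subvariety of codimension one in $\bP^2$ whose decomposition as a real-analytic subvariety has top-dimensional components $H_1 \cup H_2$, as required.
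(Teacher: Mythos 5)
Your overall strategy is the same as the paper's: take a real quartic that is irreducible as a real-algebraic curve but splits into two ovals real-analytically (the paper uses $x(x-1)(x-2)(x-3)+y^2=0$), feed it into Lemma~\ref{lemma:construct}, and obtain the ambient algebraic hypersurface by solving for $x,y$ as rational functions of $z,\bar z$ via the minors and clearing denominators. Your explicit irreducibility computation for $P$ (the discriminant argument in $\C[A,B,C]$) is a genuine addition; the paper only asserts the corresponding facts.

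The concrete gap is in how you use the output of Lemma~\ref{lemma:construct}. The set $\widetilde H$ produced by that lemma is not just the union of the lines over $X_i$: by its very definition it also contains the whole Levi-flat hypersurface $\Delta=\{z_1\bar z_2=\bar z_1 z_2\}$. Consequently, as written: (a) each $H_i$ is \emph{not} irreducible and is not the hypersurface ``induced by'' $X_i$ in the sense needed to invoke the corollary of Proposition~\ref{prop:Csubvar} ($\Delta$ is an extra codimension-one piece; e.g.\ $[1:0:0]\in\Delta$ lies on no line $z_0=xz_1+yz_2$); (b) the inclusion $\tilde H\supset H_1\cup H_2$ fails, since $\tilde H\cap\Delta=\{A=C=0\}$ has codimension at least two; and (c) the assertion that the codimension-one analytic components of $\tilde H$ ``are exactly $H_1$ and $H_2$'' is therefore not literally correct. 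The fix is exactly the remark the paper makes after the proof of Lemma~\ref{lemma:construct}: take $H_i$ to be the hypersurface \emph{induced} by $X_i$, i.e.\ the smallest subvariety containing the lines over $X_i$, which consists of those lines together with at most a thin subset of $\Delta$; with that reading your argument goes through. Two smaller steps also need tightening: the claim that the preimage of $\{P=0\}$ under the minor map is ``irreducible and reduced'' requires a word on why no component of $\{P\circ\phi=0\}$ (pure dimension $5$) can lie over the rank-$\le 1$ locus (dimension $4$) and why the pullback stays reduced ($\phi$ is a submersion on rank-$2$ matrices); and irreducibility of $P$ as a polynomial does not by itself give irreducibility of $\tilde H$ as a real-algebraic subvariety---you also need that the real zero set has codimension one (it contains the lines over $X_1$), so its complex Zariski closure is the whole irreducible complex hypersurface and $I(\tilde H)=(P)\cap\R[z,\bar z]$ is prime.
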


\begin{proof}
The key is to find an algebraic curve with such properties in $\R^2$
and apply Lemma~\ref{lemma:construct}.  For example, the curve given by
\begin{equation}
x(x-1)(x-2)(x-3) + y^2 = 0
\end{equation}
has two components as a real-analytic subvariety, but is irreducible as a
real-algebraic subvariety.  Taking only one of those components we construct
our $\widetilde{H}$.  This is clearly a proper subset of the algebraic set that
contains $\widetilde{H}$, which must contain both components.  To construct the
full algebraic set we simply solve
$z_0 = z_1x + z_2y$,
$\bar{z}_0 = \bar{z}_1x + \bar{z}_2y$,
for $x$ and $y$ in terms of rational
functions of $z$ and $\bar{z}$, which we plug into the equation
$x(x-1)(x-2)(x-3) + y^2 = 0$ and clear denominators.
\end{proof}

The proof that the construction in this note produces a real-analytic
subvariety depends on the dimension $n=2$.  It is reasonable to conjecture
that a similar construction ought to produce a subvariety in higher
dimension.

Finally, let us remark
why the Levi-foliation does not extend
for hypersurfaces constructed above, a fact already clear from
the theorem in~\cite{Lebl:projlf}, but let us identify explicitly the points
where no extension exists.
Without loss of generality and
possibly translating $X$, we assume that for all small enough $x \in \R$,
there exist at least two distinct $y \in \R$ such that $(x,y) \in X$.
Given any real $s_2$, then for all sufficiently small
real $s_0$, there are at least two distinct leaves of the hypersurface $H$
passing through $[s_0,1,s_2]$.
These points form 
a generic totally-real submanifold, biholomorphic to $\R^2$.
This set would have to lie in the
singular set of any holomorphic foliation tangent to $H$ (that is,
extending the Levi-foliation) near
$[0,1,0]$, which is impossible, as the singular set of a holomorphic foliation
is a complex analytic subvariety.
\def\MR#1{\relax\ifhmode\unskip\spacefactor3000 \space\fi%
  \href{http://www.ams.org/mathscinet-getitem?mr=#1}{MR#1}}

\begin{bibdiv}
\begin{biblist}

\bib{Bedford:flat}{article}{
   author={Bedford, Eric},
   title={Holomorphic continuation of smooth functions over Levi-flat
   hypersurfaces},
   journal={Trans.\ Amer.\ Math.\ Soc.},
   volume={232},
   date={1977},
   pages={323--341},
   issn={0002-9947},
   review={\MR{0481100}},
}

\bib{BM:semisub}{article}{
      author={Bierstone, Edward},
      author={Milman, Pierre~D.},
       title={Semianalytic and subanalytic sets},
        date={1988},
        ISSN={0073-8301},
     journal={Inst.\ Hautes \'Etudes Sci.\ Publ.\ Math.},
      number={67},
       pages={5--42},
      review={\MR{0972342}},
}

\bib{Brunella:lf}{article}{
   author={Brunella, Marco},
   title={Singular Levi-flat hypersurfaces and codimension one foliations},
   journal={Ann.\ Sc.\ Norm.\ Super.\ Pisa Cl.\ Sci.\ (5)},
   volume={6},
   date={2007},
   number={4},
   pages={661--672},
   issn={0391-173X},
   review={\MR{2394414}},
}

\bib{Brunella:firstint}{article}{
   author={Brunella, Marco},
   title={Some remarks on meromorphic first integrals},
   journal={Enseign.\ Math.\ (2)},
   volume={58},
   date={2012},
   number={3-4},
   pages={315--324},
   issn={0013-8584},
   review={\MR{3058602}},
   doi={10.4171/LEM/58-3-3},
}

\bib{burnsgong:flat}{article}{
      author={Burns, Daniel},
      author={Gong, Xianghong},
       title={Singular {L}evi-flat real analytic hypersurfaces},
        date={1999},
        ISSN={0002-9327},
     journal={Amer.\ J.\ Math.},
      volume={121},
      number={1},
       pages={23\ndash 53},
      review={\MR{1704996}},
}

\bib{CerveauLinsNeto}{article}{
   author={Cerveau, D.},
   author={Lins Neto, A.},
   title={Local Levi-flat hypersurfaces invariants by a codimension one
   holomorphic foliation},
   journal={Amer. J. Math.},
   volume={133},
   date={2011},
   number={3},
   pages={677--716},
   issn={0002-9327},
   review={\MR{2808329}},
   doi={10.1353/ajm.2011.0018},
}

\bib{Chow}{article}{
   author={Chow, Wei-Liang},
   title={On compact complex analytic varieties},
   journal={Amer. J. Math.},
   volume={71},
   date={1949},
   pages={893--914},
   issn={0002-9327},
   review={\MR{0033093}},
}

\bib{GMT:topics}{book}{
   author={Guaraldo, Francesco},
   author={Macr{\`{\i}}, Patrizia},
   author={Tancredi, Alessandro},
   title={Topics on real analytic spaces},
   series={Advanced Lectures in Mathematics},
   publisher={Friedr.\ Vieweg \& Sohn, Braunschweig},
   date={1986},
   pages={x+163},
   isbn={3-528-08963-6},
   review={\MR{1013362}},
   doi={10.1007/978-3-322-84243-5},
}

\bib{Perez:generic}{article}{
   author={Fern{\'a}ndez-P{\'e}rez, Arturo},
   title={On Levi-flat hypersurfaces with generic real singular set},
   journal={J.\ Geom.\ Anal.},
   volume={23},
   date={2013},
   number={4},
   pages={2020--2033},
   issn={1050-6926},
   review={\MR{3107688}},
   doi={10.1007/s12220-012-9317-1},
}

\bib{Perez:projlf}{article}{
   author={Fern{\'a}ndez-P{\'e}rez, Arturo},
   title={Levi-flat hypersurfaces tangent to projective foliations},
   journal={J.\ Geom.\ Anal.},
   volume={24},
   date={2014},
   number={4},
   pages={1959--1970},
   issn={1050-6926},
   review={\MR{3261727}},
   doi={10.1007/s12220-013-9404-y},
}

\bib{Perez:dynPn}{article}{
   title={On the dynamics of foliations in $\bP^n$ tangent to Levi-flat hypersurfaces},
   author={Fern{\' a}ndez-P{\' e}rez, Arturo},
   author={Mol, Rog{\' e}rio},
   author={Rosas, Rudy},
   note = {preprint \href{http://arxiv.org/abs/1403.4802}{arXiv:1403.4802}},
}

\bib{Lebl:projlf}{article}{
   author={Lebl, Ji{\v{r}}{\'{\i}}},
   title={Algebraic Levi-flat hypervarieties in complex projective space},
   journal={J.\ Geom.\ Anal.},
   volume={22},
   date={2012},
   number={2},
   pages={410--432},
   issn={1050-6926},
   review={\MR{2891732}},
   doi={10.1007/s12220-010-9201-9},
}

\bib{Lebl:lfsing}{article}{
   author={Lebl, Ji{\v{r}}{\'{\i}}},
   title={Singular set of a Levi-flat hypersurface is Levi-flat},
   journal={Math.\ Ann.},
   volume={355},
   date={2013},
   number={3},
   pages={1177--1199},
   issn={0025-5831},
   review={\MR{3020158}},
   doi={10.1007/s00208-012-0821-1},
}

\bib{Lebl:noteex}{article}{
   author={Lebl, Ji{\v{r}}{\'{\i}}},
   title={An interesting example of a compact non-$\C$-analytic real
subvariety of $\R^3$},
   note = {preprint \href{http://arxiv.org/abs/1412.4838}{arXiv:1412.4838}},
}

\bib{linsneto:note}{article}{
   author={Lins Neto, Alcides},
   title={A note on projective Levi flats and minimal sets of algebraic
   foliations},
   journal={Ann.\ Inst.\ Fourier (Grenoble)},
   volume={49},
   date={1999},
   number={4},
   pages={1369--1385},
   issn={0373-0956},
   review={\MR{1703092}},
}

\bib{ShafikovSukhov}{article}{
  title={Germs of singular Levi-flat hypersurfaces and holomorphic foliations},
  author={Shafikov, Rasul},
  author={Sukhov, Alexandre},
  note = {preprint \href{http://arxiv.org/abs/1405.4274}{arXiv:1405.4274}},
}

\bib{Whitney:u}{article}{
   author={Whitney, Hassler},
   title={The general type of singularity of a set of $2n-1$ smooth
   functions of $n$ variables},
   journal={Duke Math.\ J.},
   volume={10},
   date={1943},
   pages={161--172},
   issn={0012-7094},
   review={\MR{0007784}},
}


\end{biblist}
\end{bibdiv}

\end{document}